\title{On a Grauert-Riemenschneider vanishing theorem in dimension 3}
\author{Rahul Ajit}
\address{Department of Mathematics,
	University of Utah,
	Salt Lake City, UT 84112, USA.}
\email{rahulajit@math.utah.edu}
\date{October 27, 2025}
\begin{document}

\begin{abstract}
    Suppose $R$ is an excellent ring of dimension $3$ and has rational singularities. Let $\pi:X \longrightarrow \Spec \ R$ be a blow-up and $\phi: W \longrightarrow X$ be any projective, birational morphism such that $X$ and $W$ are both normal, Cohen-Macaulay and have pseudorational singularities in codimension $2$. Then $\myR^{i}\phi_{*}\omega_{W}=0 \ \text{ and }\myR^{i}\pi_{*}\omega_{X} = 0$ for all $i>0$ and $X$ has rational singularities. We use this result to prove Lipman's vanishing conjecture in dimension $3$ for arbitrary characteristics and provide a few applications. 
\end{abstract}

\maketitle

\tableofcontents
	
	\newpage

\section{Introduction}
Kodaira-type vanishing theorems are fundamental tools in birational algebraic geometry. A relative variant of Kodaira vanishing is the Grauert-Riemenschneider (GR) vanishing theorem \cite{GR-Original} which states that for a projective birational morphism $f:Y\to X$ where $Y$ is a smooth projective variety over $\bC$, we have $\myR^if_*\omega_Y=0 \text{ for all } i>0.$

Rational singularities have been the gold standard for “mild” singularities in algebraic geometry. In characteristic $0$, a scheme X is said to have \textit{rational singularities} if for any resolution of singularities $f:Y\to X$, we have 
  $f_*\sO_Y \simeq \sO_X$, (i.e, $X$ is normal), and 
  $\myR^i  f_*\sO_X=0$ for $i>0$. Kempf, using Grauert-Riemenschneider vanishing, proved that a normal scheme $X$ has rational singularities if and only if it is Cohen-Macaulay and for any resolution of singularities $f:Y\to X$, we have $f_*\omega_Y\simeq \omega_X$, see \cite[page 50]{KempfToroidalEmbeddings}. There is a close relationship between MMP singularities and rational singularities; in particular, Kawamata log terminal singularities are rational (\cite{Elkik-Canonical-Rationa}) and Gorenstein rational singularities are canonical.

Unfortunately, Kodaira vanishing fails in positive characteristics (\cite{raynaud_contre-exemple_1978}), and as a result, by the cone construction one sees that, the GR vanishing fails when the dimension is at least 3 (\cite[Example 3.11]{HaconKovacsGenericVanishingFails}). On a positive note, Chatzistamatiou and R\"ulling proved in \cite{CR-Excellent-GR} that GR vanishing holds for excellent, regular schemes $X$ and $Y$ in any dimension. In dimension $3$, positive and mixed characteristics $(p > 5)$, GR vanishing holds for klt singularities, proving that they are rational, see \cite{arvidson_vanishing,  bernasconi_kollar_vanishing, bhutani2025kawamataviehwegvanishingsurfacesdel, hacon_witaszek_rationality}. 
 
However, as the resolution of singularities is not known in dimensions greater than $3$ in positive and mixed characteristics, what should be the ``correct" definition of rational singularity remains an interesting question, see \cite{kovacs_rational_2017, lyu2022propertiesbirationalderivedsplinters, ishii2025vanishinghigherdirectimages}. Due to the example in \cite[Page 174]{Cutkosky-New-Rational}, we can not simply assume $Y$ to be normal, even when $X$ is regular. Furthermore, as the statement involves the dualizing sheaf $\w_Y$ and not the dualizing complex $\w_Y^\mydot$, we need to assume $Y$ is Cohen-Macaulay. But, by applying Brodmann's Macaulayfication (\cite[Corollary 1.4]{Brodmann-Macaul}) of Cutkosky's example, Ma showed that there exists a projective, birational morphism $f:Y\to X= \bA^3_\bC$ with $Y$ arithmetically Cohen-Macaulay, such that $\myR^2  f_*\sO_Y \neq 0$, see \cite[Example 3.2]{ishii2025vanishinghigherdirectimages} for details. However, we show in our main theorem that such (counter-)examples can not occur if we assume $Y$ to be Cohen-Macaulay and pseudorational in codimension $2$.

\begin{ThA*}(cf. Theorems \ref{thm:main-vanishing}, \ref{thm:stability-modification} and, \ref{thm:modificationandrationality})
   \textit{Let $(R, \maxm)$ be an excellent, local ring of dimension 3 with rational singularities. Let $\pi:X \longrightarrow \Spec \ R$ be a blow-up where $X$ is normal, Cohen-Macaulay, and has pseudorational singularities in codimension 2. Let $\phi: W\to X$ be any projective, birational morphism where $W$ is normal, Cohen-Macaulay, and pseudorational in codimension 2. Then we have the following:
   \begin{enumerate}
       \item $\myR^{i}\phi_{*}\omega_{W}=0$ for $i>0$,
       \item $\myR^{i}\pi_{*}\omega_{X}=0$ for $i>0$,
       \item $X$ has rational singularities.
   \end{enumerate}
    }
\end{ThA*}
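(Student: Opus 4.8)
The plan is to collapse all three statements to a single vanishing for the structure sheaf of a resolution, which the rationality of $R$ then forces through a spectral-sequence argument, the entire three-dimensional difficulty having first been confined to closed points by the codimension-two hypotheses. Concretely, I would fix one regular scheme $V$ together with projective birational maps $V \xrightarrow{a} W \xrightarrow{\phi} X \xrightarrow{\pi} \Spec R$ (resolution of three-folds being available in our generality), and write $g=\phi\circ a$, $h=\pi\circ g$, $\psi=\pi\circ\phi$. Since $W$, $X$ and $\Spec R$ are Cohen--Macaulay of dimension $3$, Grothendieck duality gives $\myR\phi_*\omega_W\simeq \myR\sHom_X(\myR\phi_*\mathcal{O}_W,\omega_X)$ and the analogous formulas for $\pi,a,g$; hence each vanishing $\myR^i(\cdot)_*\omega=0$ $(i>0)$ is equivalent to its dual $\myR^i(\cdot)_*\mathcal{O}=0$, and the trace isomorphisms $\phi_*\omega_W\cong\omega_X$, $\pi_*\omega_X\cong\omega_R$ are read off the degree-zero part once the higher terms are killed. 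Thus it suffices to prove $\myR g_*\mathcal{O}_V\simeq\mathcal{O}_X$ and $\myR a_*\mathcal{O}_V\simeq\mathcal{O}_W$: composing these yields $\myR\phi_*\mathcal{O}_W\simeq\myR g_*\mathcal{O}_V\simeq\mathcal{O}_X$ and $\myR\pi_*\mathcal{O}_X\simeq\myR h_*\mathcal{O}_V\simeq\mathcal{O}_R$, which give (1) and (2) after dualizing, while $\myR g_*\mathcal{O}_V\simeq\mathcal{O}_X$ is precisely the statement that $V\to X$ realizes rationality, i.e. (3).

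The two inputs are as follows. First, because $R$ has rational singularities and $V\to\Spec R$ is a resolution, $\myR^i h_*\mathcal{O}_V=0$ and $\myR^i h_*\omega_V=0$ for all $i>0$. Second, a codimension-two reduction: localizing at a point $x\in X$ of codimension $\le 2$, the ring $\mathcal{O}_{X,x}$ has dimension $\le 2$ and is pseudorational, hence rational by the Lipman--Teissier equivalence in low dimension, so Lipman's surface theory forces the higher direct images of both $\mathcal{O}_V$ and $\omega_V$ to vanish there; the same holds over $W$. Consequently $\myR^q g_*\mathcal{O}_V,\ \myR^q g_*\omega_V,\ \myR^q a_*\mathcal{O}_V,\ \myR^q a_*\omega_V$ are supported at finitely many closed points for every $q>0$, while $\myR^i(\cdot)_*$ vanishes for $i\ge 3$ as the fibres have dimension at most two.

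Now I would run the Leray spectral sequence $\myR^p\pi_*\myR^q g_*(-)\Rightarrow \myR^{p+q}h_*(-)$ and its companion for $a$ over $\psi$. Since for $q>0$ the sheaves $\myR^q g_*(-)$ are supported at closed points, each such row is concentrated in the column $p=0$ (no higher $\pi_*$), while the row $q=0$ stops at $p=2$ and $\myR^3\pi_*=0$. Therefore the corner term $E_2^{0,2}=\pi_*\myR^2 g_*(-)$ supports no differentials and is a subquotient of the degree-two abutment; as the latter vanishes by the first input, $\myR^2 g_*\mathcal{O}_V=0$ follows from the $\mathcal{O}$-sequence and $\myR^2 g_*\omega_V=0$ (equivalently $\myR^1 g_*\mathcal{O}_V=0$, by duality) from the $\omega$-sequence. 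Hence $\myR g_*\mathcal{O}_V\simeq\mathcal{O}_X$, and the identical computation over $\psi$ gives $\myR a_*\mathcal{O}_V\simeq\mathcal{O}_W$; assembling by composition as above yields (1), (2) and (3). Here the projectivity of $\pi$ furnished by the blow-up hypothesis, and of the resolutions, is what guarantees that Grothendieck duality and the comparison over the rational base $R$ apply.

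The main obstacle is the codimension-two reduction together with the precise spectral-sequence bookkeeping. The whole force of the theorem lies in showing that the genuinely three-dimensional Grauert--Riemenschneider obstruction --- the possibly nonzero $\myR^2(\cdot)_*\mathcal{O}$ responsible for Ma's counterexample --- is supported only at closed points; this is exactly what collapses the spectral sequence to the single corner, and it is exactly what fails without the pseudorationality-in-codimension-two hypothesis. Once confined to closed points, the obstruction is annihilated by the vanishing over the rational $R$. The delicate points to verify are that the low-dimensional pseudorational locus is genuinely rational (so that Lipman's surface results apply verbatim) and that the duality dictionary between the $\mathcal{O}$- and $\omega$-pictures, including the trace identifications $\phi_*\omega_W\cong\omega_X$ and $\pi_*\omega_X\cong\omega_R$, holds at each stage.
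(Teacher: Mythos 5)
Your proposal is correct in substance, but it takes a genuinely different route from the paper's, so let me compare. The paper runs only the $\mathcal{O}$-Leray spectral sequence (Steps 1--3 of Theorem \ref{thm:main-vanishing}), which --- exactly as in your corner argument --- gives $H^1(X,\mathcal{O}_X)=0$ together with an identification of $H^0(X,\myR^1\phi_*\mathcal{O}_Y)$ with $H^2(X,\mathcal{O}_X)$, and then kills this group by quoting the Lodh/Ishii--Yoshida vanishing (Theorem \ref{thm:ishii-yoshida}); for the $\omega$-vanishing in (2) it does not dualize directly but goes through the blow-up structure, using Lipman's theorem (Theorem \ref{thm:lipman-cm}) to make a Veronese $S^{(N)}$ of the Rees algebra Cohen--Macaulay and then the Sancho de Salas sequence (Theorem \ref{cor:sds-special}) plus local duality. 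You replace both inputs: a second Leray sequence applied to $\omega_V$ (whose abutment vanishes because $\myR h_*\omega_V\simeq\omega_R$, by rationality of $R$ and Grothendieck duality) gives $\myR^2 g_*\omega_V=0$, and a local-duality identification converts this into $\myR^1 g_*\mathcal{O}_V=0$; after that, (1), (2), (3) all follow by dualizing $\myR\phi_*\mathcal{O}_W\simeq\mathcal{O}_X$ and $\myR\pi_*\mathcal{O}_X\simeq R$, which is the same maneuver the paper itself performs at the end of Theorem \ref{thm:stability-modification}. Your route is self-contained (no Ishii--Yoshida, no Rees-algebra machinery), and it never uses that $\pi$ is a blow-up, so it proves the statement for an arbitrary projective birational $\pi$; what it gives up is the intermediate conclusion that $S^{(N)}$ is Cohen--Macaulay, which the paper reuses in Corollary \ref{hara-watanabe-yoshida}.

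The one step you must actually write out --- and the one place where your wording is off --- is the duality dictionary. The blanket claim that $\myR^i(\cdot)_*\omega=0$ is equivalent, degree by degree, to $\myR^i(\cdot)_*\mathcal{O}=0$ is false in general: duality pairs the whole complexes, and $\mathcal{E}xt$-sheaves of $g_*\omega_V$ enter the comparison in other degrees. What is true, and what your argument actually needs, is the single isomorphism $\myR^2 g_*\omega_V\cong \mathcal{E}xt^3_X\bigl(\myR^1 g_*\mathcal{O}_V,\omega_X\bigr)$, the Matlis dual of $\myR^1 g_*\mathcal{O}_V$. This does hold in your situation: since $g_*\mathcal{O}_V=\mathcal{O}_X$ and $\myR^1 g_*\mathcal{O}_V$, $\myR^2 g_*\mathcal{O}_V$ have finite length with support at closed points of codimension $3$, the spectral sequence with $E_2$-terms $\mathcal{E}xt^p_X(\myR^q g_*\mathcal{O}_V,\omega_X)$ converging to $\myR^{p-q}g_*\omega_V$, obtained from $\myR g_*\omega_V\simeq\myR\mathcal{H}om_X(\myR g_*\mathcal{O}_V,\omega_X)$, has only three nonzero entries: $\omega_X$ (from $q=0$, $p=0$) and the two Matlis duals in column $p=3$, because local duality over the Cohen--Macaulay local rings $\mathcal{O}_{X,x}$ kills $\mathcal{E}xt^p$ of a finite-length module against $\omega_X$ for $p\neq 3$. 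The entry $\mathcal{E}xt^3_X(\myR^1 g_*\mathcal{O}_V,\omega_X)$ neither receives nor supports any differential and is the entire degree-$2$ abutment, which proves the isomorphism; by faithfulness of Matlis duality, $\myR^2 g_*\omega_V=0$ then forces $\myR^1 g_*\mathcal{O}_V=0$. Granting this lemma --- which is the precise replacement for Theorem \ref{thm:ishii-yoshida} in your scheme --- the rest of your plan goes through, including the parallel argument for $a:V\to W$ over $\psi=\pi\circ\phi$, which uses the Cohen--Macaulayness and codimension-$2$ pseudorationality of $W$ in exactly the same way.
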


Using this theorem, we prove Lipman's vanishing conjecture (see \cite[Vanishing Conjecture (2.2)]{LipmanAdjointsOfIdealsInRegularLocal}) for arbitrary characteristics in dimension $3$. 

\begin{ThB*}(see \cite[(2.2), Theorem A3.]{LipmanAdjointsOfIdealsInRegularLocal}, \cite[2.7]{HyryVillamayorBriansconSkodaForIsolated} and Proposition \ref{prop:lipman-conjecture-dim3})
\textit{Let $(R, \frm)$ be a three-dimensional rational singularity, and let $\cI \subset R$ be an ideal. Consider the blow-up 
\[
f: X = \Proj \ S = \Proj\left(\bigoplus_{n \geq 0} \cI^n\right) \longrightarrow \Spec R
\] 
along $\cI$. Assume that $S$ is Cohen-Macaulay, $X$ is normal, and has pseudorational singularities in codimension 2. Then for all integers $n \geq 0$ and all positive integers $i > 0$, we have
\[
H^i(X, \cI^n \omega_X) = 0.
\]}
\end{ThB*}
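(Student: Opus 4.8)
The plan is to establish the equivalent assertion that $\myR^{i}f_{*}\big(\cI^{n}\omega_{X}\big)=0$ for all $i>0$ and all $n\ge 0$, from which the stated vanishing $H^{i}(X,\cI^{n}\omega_{X})=0$ follows immediately, since $\Spec R$ is affine and these cohomology groups are precisely the global sections of the higher direct images. The first step is to unwind the sheaf $\cI^{n}\omega_{X}$: because $X=\Proj S$ is the blow-up of $\cI$, the inverse image ideal sheaf $\cI\cO_{X}=\cO_{X}(1)$ is invertible and $f$-ample, so the local equation of the exceptional divisor is a nonzerodivisor on the maximal Cohen-Macaulay sheaf $\omega_{X}$ and $\cI^{n}\omega_{X}=\cO_{X}(n)\otimes\omega_{X}=:\omega_{X}(n)$. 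Thus I reduce to proving $H^{i}(X,\omega_{X}(n))=0$ for $i>0$, which I split into the two ranges $n\ge 1$ and $n=0$; the subcase $n\gg 0$ is, of course, immediate from relative Serre vanishing for the $f$-ample $\cO_{X}(1)$.

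For $n\ge 1$ I would exploit the hypothesis that the Rees algebra $S=\bigoplus_{n\ge0}\cI^{n}$ is Cohen-Macaulay. Since $S$ is CM, so is $X=\Proj S$, and the dualizing sheaf is the sheafification $\omega_{X}=\widetilde{\omega_{S}}$ of the graded canonical module $\omega_{S}$, itself a maximal Cohen-Macaulay $S$-module. The Grothendieck-Serre correspondence then identifies $H^{i}(X,\omega_{X}(n))\cong\big[H^{\,i+1}_{S_{+}}(\omega_{S})\big]_{n}$ for $i\ge1$, so it suffices to see that these graded local cohomology modules vanish in degrees $n\ge1$. This is exactly the regime governed by the negativity of the $a$-invariant of a Cohen-Macaulay Rees algebra: graded local duality over $S$ matches the non-negative-degree part of $H^{\bullet}_{S_{+}}(\omega_{S})$ with the strictly-negative-degree part of $H^{\bullet}_{S_{+}}(S)$, and the bound $a(S)<0$ forces the former to vanish for $n\ge1$. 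This is the content I would import from Hyry-Villamayor \cite[2.7]{HyryVillamayorBriansconSkodaForIsolated} and Lipman \cite[Theorem A3]{LipmanAdjointsOfIdealsInRegularLocal}, whose homological bookkeeping handles precisely the range $n\ge1$ once $S$ is known to be Cohen-Macaulay.

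The remaining, and genuinely decisive, case is $n=0$. Here $H^{i}(X,\omega_{X})=\myR^{i}f_{*}\omega_{X}$, and the required vanishing $\myR^{i}f_{*}\omega_{X}=0$ for $i>0$ is exactly the Grauert-Riemenschneider statement furnished by Theorem A (applied with $\pi=f$ and $\phi=\id_{X}$), whose hypotheses hold because $R$ is an excellent three-dimensional rational singularity, $X=\Proj S$ is normal and Cohen-Macaulay (as $S$ is CM), and $X$ is pseudorational in codimension $2$ by assumption. Theorem A moreover tells us that $X$ has rational singularities, which is what legitimizes the identification $\omega_{X}=\widetilde{\omega_{S}}$ and the Cohen-Macaulay structure used in the preceding step. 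Splicing the two ranges together yields $H^{i}(X,\cI^{n}\omega_{X})=0$ for all $i>0$ and $n\ge0$.

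The main obstacle is concentrated entirely in the boundary case $n=0$: the $n\ge1$ vanishing is classical Cohen-Macaulay Rees-algebra theory, whereas $\myR^{i}f_{*}\omega_{X}=0$ is precisely the GR vanishing that fails in general in positive and mixed characteristic and that Theorem A supplies in dimension $3$. The secondary technical points to verify are that $S$ being Cohen-Macaulay genuinely forces $X$ to be Cohen-Macaulay with $\omega_{X}=\widetilde{\omega_{S}}$, and that the $a$-invariant bound is available under these hypotheses, so that the degree bookkeeping cleanly isolates the automatic range $n\ge1$ from the hard edge $n=0$.
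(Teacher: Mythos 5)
Your split into $n=0$ and $n\ge 1$ matches the paper's, and your treatment of $n=0$ is identical to it (both invoke Theorem \ref{thm:main-vanishing}, with $X$ Cohen--Macaulay because $S$ is). The problem is your mechanism for $n\ge 1$: the duality you invoke there does not exist in the form you state. Since $S_0=R$ is a three-dimensional local ring, not a field or Artinian ring, there is no graded local duality matching $[H^{\bullet}_{S_+}(\omega_S)]_{\ge 0}$ against $[H^{\bullet}_{S_+}(S)]_{<0}$; that statement is graded Serre duality for a projective scheme over a field, and $X$ here is only proper over $\Spec R$. In the relative setting the only Serre-type duality available pairs $H^{i}(X,\omega_X(n))$ with cohomology \emph{supported in the closed fibre} $Z=\pi^{-1}(\frm)$, namely $H^{3-i}_Z(X,\cO_X(-n))\cong \Hom_R\bigl(H^{i}(X,\omega_X(n)),E(R/\frm)\bigr)$, which is the duality of \cite[Lemma (4.2)]{LipmanCohenMacaulaynessInGradedAlgebras} used in Step 6 of Theorem \ref{thm:main-vanishing}. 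Relatedly, the $a$-invariant bound $a(S)<0$ concerns $H^{\dim S}_{\frm_S}(S)$ with $\frm_S=\frm\oplus S_+$, not $H^{\bullet}_{S_+}(S)$: you conflate the ideals $S_+$ and $\frm_S$ throughout. Nor can the missing step be ``imported'' from \cite[Theorem A3]{LipmanAdjointsOfIdealsInRegularLocal} or \cite[2.7]{HyryVillamayorBriansconSkodaForIsolated}: both are characteristic-zero arguments (Cutkosky's via resolutions and Grauert--Riemenschneider; Hyry--Villamayor's via the Grothendieck construction, rational singularities of an auxiliary space, and Sancho de Salas), i.e.\ they rest on exactly the vanishing that is unavailable in positive and mixed characteristic and that this paper is designed to work around. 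So, as written, there is a genuine gap.

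The gap is repairable, and the repair is worth comparing with the paper. Your instinct that ``$S$ Cohen--Macaulay makes $n\ge 1$ pure bookkeeping'' is right, but the correct bookkeeping goes through supports: since $S$ is CM of dimension $4$, $H^i_{\frm_S}(S)=0$ for $i\le 3$, so the Sancho de Salas sequence (Theorem \ref{cor:sds-special}) gives, for $i\le 2$, that $H^i_Z(X,\cO_X(n))\cong H^i_\frm(\cI^n)$ for $n\ge 0$ and $H^i_Z(X,\cO_X(n))=0$ for $n<0$; feeding this into the duality above yields $H^{j}(X,\omega_X(n))=0$ for all $j\ge 1$ and $n\ge 1$, and (using only $H^i_\frm(R)=0$ for $i<3$, i.e.\ $R$ Cohen--Macaulay) the case $n=0$ as well. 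This is essentially Step 6 of Theorem \ref{thm:main-vanishing} run in all twists, and it is genuinely different from the paper's route for $n\ge 1$: the paper argues geometrically, realizing $Y=\Proj S^{\#}$ both as a line-bundle total space over $X$ and as the blow-up of $\Spec S$ along $S_{\ge 1}$, deducing that $Y$ has rational singularities from Theorem \ref{thm:stability-modification}, and then applying Sancho de Salas' theorem \cite{SanchoDeSalas} to get $H^i(Y,\omega_Y)=0$. The duality route avoids the auxiliary space and the rationality input entirely, so it would be a simplification --- but it is not the argument you wrote down, and your proposal cannot stand without it.
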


\noindent In characteristic $0$, this was proved by Cutkosky (see \cite[Theorem A3.]{LipmanAdjointsOfIdealsInRegularLocal}) and by Hyry and Villamayor U. (see, \cite[2.7]{HyryVillamayorBriansconSkodaForIsolated}) in all dimensions.

\noindent We end this short note by presenting some applications of Theorem A and Theorem B, see Corollary \ref{hara-watanabe-yoshida}, Proposition \ref{hyry-rationality} and Corollary \ref{skoda}.

\section{Background}

\begin{definition}(\cite[Definition 2.76]{KollarSingularities}, \cite{lipman_rational_1969})\label{def:rational}
A local ring $(R, \mathfrak{m})$ is said to have \emph{rational singularities} if 
\begin{enumerate}
\item $R$ is normal, excellent, Cohen-Macaulay, admitting a dualising complex, and
\item there exists a resolution of singularities $f: Y \to \Spec R$ such that $\myR^i f_* \mathcal{O}_Y = 0$ for all $i > 0$.
\end{enumerate}
\end{definition}

\begin{remark}
    Thanks to \cite{CR-Excellent-GR}, we have a well-defined notion of rational singularities in arbitrary characteristics, assuming (log) resolution of singularities (which exists, up to dimension 3, see \cite{Cossart-Piltant-charP-1}, \cite{Cossart-Piltant-charP-2} and \cite{RoS-arithmetic}).
\end{remark}

\noindent A resolution-free definition of rational singularities was introduced by Lipman and
Teissier.
\begin{definition}(\cite[Section 2]{LipmanTeissierPseudorationallocalringsandatheoremofBrianconSkoda})
A local ring $(R, \mathfrak{m})$ is said to have \emph{pseudorational singularities} if
\begin{enumerate}
\item $R$ is normal and Cohen-Macaulay and, $\widehat{R}$ is reduced.
\item For every proper birational morphism $\pi: W \to \Spec R$ with $W$ normal, if $E = \pi^{-1}(\frm)$ is the closed fibre, then the canonical map 
\[
\delta_\pi: H_\frm^d(R) \rtarr H_E^d(\cO_W)
\]is injective.
\end{enumerate}
\end{definition}

\begin{remark}
    Kempf, using Grauert-Riemenschneider vanishing showed that in characteristic $0$, a normal scheme $X$ has rational singularities if and only if it has pseudorational singularities, see \cite[page 50]{KempfToroidalEmbeddings}. Also, any regular ring is pseudorational (\cite{LipmanTeissierPseudorationallocalringsandatheoremofBrianconSkoda}).
\end{remark}

\begin{definition}
   A scheme $X$ has \textit{pseudorational singularities in codimension 2} if, for all $x\in X$ with $\dim \cO_{X, x} = 2$, the ring $\cO_{X,x}$ has pseudorational (or equivalently, rational, by \cite[Page 103, Example (a)]{LipmanTeissierPseudorationallocalringsandatheoremofBrianconSkoda}) singularities.
\end{definition}
\begin{definition}(\cite[Definition 2]{BlickleMultiplierIdealsAndModulesOnToric}, \cite[Definition 2.1]{WeakOrdanityConj})
    Suppose that $X$ is either a normal, excellent $\Q$-scheme, or a normal, excellent scheme of dimension $\leq 3$ and $\Delta$ is an effective $\bQ$-Cartier divisor, $\fra$ is an ideal sheaf and $\la \geq 0$ is a real number.
Let $\pi : Y \to X$ be a proper birational morphism with $Y$ normal such that $\fra \cdot \cO_Y = \cO_Y(-G)$ is invertible, and we assume that $K_X$ and $K_Y$ agree wherever $\pi$ is an isomorphism. We assume that $\pi$ is a log resolution of $(X, \Delta, \fra)$.  Then we define the \textit{multiplier module} to be
\[
\cJ(\omega_X, \Delta, \fra^\la) = \pi_* \cO_Y(\lceil K_Y - \pi^*\Delta  - \la G\rceil) \subseteq \omega_X.
\]
When $\Delta = 0$, we omit it, i.e, we write $\cJ(\omega_X, \fra^\la) := \cJ(\omega_X, 0, \fra^\la)$.
\end{definition}
\noindent It is a general fact that these definitions are independent of the (log) resolution chosen, see \cite[Theorem 9.2.18]{LazarsfeldPositivity2} and \cite{WeakOrdanityConj}.

\begin{remark}
    Suppose that $X$ is a reduced, equidimensional, and Cohen-Macaulay scheme and $\fra$ is an ideal sheaf on $X$.  Then the pair $(X, \fra^\la)$ has rational singularities in the sense of \cite[Definition 3.1]{RationalSingPair} if and only if the multiplier submodule $\cJ(\omega_X, \fra^\la)$ is equal to $\omega_X$, see \cite[Corollary 3.8]{RationalSingPair}.
\end{remark}

We recall the Sancho de Salas exact sequence (\cite{SanchoDeSalas}) for blow ups, which serves as a bridge connecting local cohomology to sheaf cohomology.

\begin{theorem}(\cite{SanchoDeSalas, LipmanCohenMacaulaynessInGradedAlgebras, Hyry-Smith-Kawamata})\label{cor:sds-special}
The Sancho de Salas exact sequence for the Rees Algebra $S$ is 
$$
\dots \longrightarrow H^i_{\maxm_S}(S) \longrightarrow  \bigoplus_{n\in \bN} 
H^i_{\maxm}(\fra^n) \longrightarrow 
\bigoplus_{n \in \bZ} H^i_{Z}(X, \cO_X(-nE)) \longrightarrow
H^{i+1}_{\maxm_S}(S) \longrightarrow \dots,
$$
where $\mathfrak{m}_S = \mathfrak{m} \oplus \bigoplus_{n \ge 1} \mathfrak{a}^n$, $ X = \Proj \ S \to  \Spec \ R$ is the blowup along $\mathfrak{a}$ so that $\mathfrak{a} \cO_X= \cO_X(-E)$ and $Z = X \times_{\Spec R} \Spec (R/\maxm)$ is the scheme-theoretic fiber over the closed point $\maxm$ of 
$\Spec A$.
We get, as graded $R$-modules, 
$H^{d+1}_{\maxm_S}(S) \cong \oplus_{n < 0}H^d_Z(X, \cO_X(-nE))$ \cite[2.5.2 (1)]{Hyry-Smith-Kawamata}. This is because 
the maps $H^d_m(\fra^n) \longrightarrow H_Z^d(X, \cO_X(-nE))$ are surjective for all
 $n \geq 0 $
(see, \cite[page 103]{LipmanTeissierPseudorationallocalringsandatheoremofBrianconSkoda}). Hence, Matlis duality gives, \[\omega_\rS \simeq\bigoplus_{n>0}H^0(X,\omega_X(-nE)).\]
\end{theorem}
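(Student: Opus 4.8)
The plan is to take the Sancho de Salas exact sequence as the main external input (following \cite{SanchoDeSalas, LipmanCohenMacaulaynessInGradedAlgebras, Hyry-Smith-Kawamata}) and to extract the two stated consequences by a degree-by-degree analysis, followed by an application of duality. I would treat the long exact sequence as a sequence of graded $R$-modules, so that I may read off information in each internal degree $n$ separately; the key arithmetic fact I will exploit repeatedly is that $S=\bigoplus_{n\geq 0}\fra^n$ has no pieces in negative degree, whereas $\cO_X(-nE)$ is defined for all $n\in\bZ$.

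First, for the isomorphism $H^{d+1}_{\maxm_S}(S)\cong\bigoplus_{n<0}H^d_Z(X,\cO_X(-nE))$ with $d=\dim R$: fixing $i=d$ and restricting the sequence to internal degree $n$, the relevant four-term exact segment is
\[
H^d_\maxm(\fra^n) \longrightarrow H^d_Z(X, \cO_X(-nE)) \longrightarrow H^{d+1}_{\maxm_S}(S)_n \longrightarrow H^{d+1}_\maxm(\fra^n).
\]
For $n<0$ both outer terms vanish (the left because $\fra^n$ contributes no piece in negative degree, the right for the same reason, and in any case because $H^{>d}_\maxm=0$ over the $d$-dimensional ring $R$), giving $H^{d+1}_{\maxm_S}(S)_n\cong H^d_Z(X,\cO_X(-nE))$. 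For $n\geq 0$ I would invoke the surjectivity of $H^d_\maxm(\fra^n)\to H^d_Z(X,\cO_X(-nE))$ recorded in \cite[page 103]{LipmanTeissierPseudorationallocalringsandatheoremofBrianconSkoda}; since the cokernel then vanishes and $H^{d+1}_\maxm(\fra^n)=0$, exactness forces $H^{d+1}_{\maxm_S}(S)_n=0$. Summing over $n$ yields the claimed graded isomorphism.

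Next, to obtain $\omega_S\cong\bigoplus_{n>0}H^0(X,\omega_X(-nE))$: since $\dim S=d+1$, graded local duality identifies the graded canonical module $\omega_S$ with the graded Matlis dual of the top local cohomology $H^{d+1}_{\maxm_S}(S)$, with a reversal of internal degrees; writing $(-)^\vee=\Hom_R(-,E_R(R/\maxm))$, the degree-$n$ piece of $\omega_S$ is therefore $\big(H^{d+1}_{\maxm_S}(S)_{-n}\big)^\vee$, which by the previous step equals $H^d_Z(X,\cO_X(nE))^\vee$ for $n>0$ and vanishes for $n\leq 0$. It then remains to identify this Matlis dual with $H^0(X,\omega_X(-nE))$. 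For this I would combine the base-change identity $\myR\Gamma_Z(X,-)\simeq\myR\Gamma_\maxm(\myR\pi_*(-))$ (valid since $Z=\pi^{-1}(\maxm)$), local duality over $R$ with dualizing complex $\omega_R^\mydot$, and Grothendieck duality for the proper morphism $\pi$, which together give
\[
\myR\Gamma_Z(X, \cO_X(nE))^\vee \simeq \myR\pi_*\, \RsHom_X\big(\cO_X(nE), \omega_X^\mydot\big) \simeq \myR\pi_*\big(\omega_X(-nE)\big)[d],
\]
the last step using that $X$ is Cohen-Macaulay of dimension $d$, so $\omega_X^\mydot\simeq\omega_X[d]$. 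Passing to the top cohomology $H^d_Z$ on the left corresponds to $H^0$ of $\myR\pi_*\omega_X(-nE)$ on the right, whence $H^d_Z(X,\cO_X(nE))^\vee\cong H^0(X,\omega_X(-nE))$; summing over $n>0$ completes the identification.

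I expect the main obstacle to be the careful assembly of the duality identification in the last step: one must correctly compose the local-cohomology/pushforward comparison, local duality on $R$, and relative Grothendieck duality for $\pi$ while tracking the internal grading and the cohomological shifts, and it is precisely here that the Cohen-Macaulay hypothesis on $X$ is essential, in collapsing the dualizing complex $\omega_X^\mydot$ to the single sheaf $\omega_X[d]$. By contrast, the degree-by-degree extraction of the graded isomorphism is comparatively routine once the exact sequence and the cited surjectivity are in hand.
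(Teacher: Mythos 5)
Your proposal is correct and takes essentially the same route as the paper: the paper's own justification is precisely the degree-by-degree reading of the Sancho de Salas sequence together with the Lipman--Teissier surjectivity of $H^d_\maxm(\fra^n)\to H^d_Z(X,\cO_X(-nE))$ for $n\geq 0$ (which yields $H^{d+1}_{\maxm_S}(S)\cong\bigoplus_{n<0}H^d_Z(X,\cO_X(-nE))$), followed by graded Matlis duality and the formal duality $H^d_Z(X,\cO_X(nE))^\vee\cong H^0(X,\omega_X(-nE))$, which the paper cites as \cite[Lemma (4.2)]{LipmanCohenMacaulaynessInGradedAlgebras} and you instead rederive from $\myR\Gamma_Z\simeq\myR\Gamma_\maxm\circ\myR\pi_*$, local duality, and Grothendieck duality for $\pi$. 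Your filled-in details, including the role of the Cohen--Macaulay hypothesis in collapsing $\omega_X^\mydot$ to $\omega_X[d]$, are accurate.
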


A crucial ingredient in proving our main results is the following vanishing theorem of Lodh; and Ishii and Yoshida:

\begin{theorem}[\cite{ishii2025vanishinghigherdirectimages}, \cite{RemiLodh}]\label{thm:ishii-yoshida}
Let $Y$ be a Noetherian scheme of dimension $N \ge 1$. Assume, $Y$ is locally quasi-unmixed, has pseudorational singularities in codimension two, and satisfies Serre's condition $S_3$. Let $\varphi: X \to Y$ be a proper birational morphism of finite type with $X$ normal. Then,
\[
\myR^1 \varphi_* \mathcal{O}_X = 0.
\]
\end{theorem}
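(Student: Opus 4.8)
The plan is to show that the coherent sheaf $\myR^{1}\varphi_{*}\cO_{X}$ vanishes by first pinning down its support and then eliminating the remaining high-codimension part. Since the assertion is local on $Y$, I would replace $Y$ by $\Spec R$ for a local ring $(R,\maxm)$ with the stated properties and prove $H^{1}(X,\cO_{X})=0$. First note that $Y$ is normal: pseudorationality in codimension $2$ forces $R_{1}$ (each codimension-$2$ local ring is normal, hence regular at its height-one primes, and quasi-unmixedness — hence catenarity — makes codimension additive so every codimension-$1$ point is regular), while $S_{3}$ gives $S_{2}$; by Serre's criterion $Y$ is normal and $\varphi_{*}\cO_{X}=\cO_{Y}$ by Zariski's Main Theorem. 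Over a codimension-$1$ point $\cO_{Y,y}$ is a DVR, so a proper birational morphism with normal source is an isomorphism there; hence $\supp\myR^{1}\varphi_{*}\cO_{X}$ has codimension $\ge 2$.

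Next I would dispose of codimension $2$. Localizing at a codimension-$2$ point $y$, the ring $\cO_{Y,y}$ is a two-dimensional pseudorational — equivalently rational — local ring, and $\varphi_{y}\colon X_{y}=X\times_{Y}\Spec\cO_{Y,y}\to\Spec\cO_{Y,y}$ is proper birational with $X_{y}$ normal. Choosing a resolution $g\colon Z\to X_{y}$ (excellent surfaces admit resolutions, \cite{lipman_rational_1969}) and using $g_{*}\cO_{Z}=\cO_{X_{y}}$ together with the Leray sequence, $\myR^{1}(\varphi_{y})_{*}\cO_{X_{y}}$ injects into $\myR^{1}$ of the composite $\varphi_{y}\circ g$, which vanishes because $\cO_{Y,y}$ is rational. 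Thus $\supp\myR^{1}\varphi_{*}\cO_{X}$ has codimension $\ge 3$, and only components of codimension $\ge 3$ remain to be excluded.

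For the heart of the argument I would localize at a minimal prime of the support, so that $R$ is local of dimension $d\ge 3$, is $S_{3}$, and the finitely generated module $M:=H^{1}(X,\cO_{X})$ computing $\myR^{1}\varphi_{*}\cO_{X}$ has finite length and is nonzero; the goal is a contradiction. Writing $E=\varphi^{-1}(\maxm)$, the composite $X\to\Spec R\to\{\maxm\}$ yields the spectral sequence $H^{p}_{\maxm}(\myR^{q}\varphi_{*}\cO_{X})\Rightarrow H^{p+q}_{E}(X,\cO_{X})$. Since $R$ is $S_{3}$ of dimension $\ge 3$ we have $\operatorname{depth}_{\maxm}R\ge 3$, so $H^{1}_{\maxm}(R)=H^{2}_{\maxm}(R)=0$; feeding this into the low-degree terms identifies $H^{1}_{E}(X,\cO_{X})\cong H^{0}_{\maxm}(M)=M$. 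Hence everything reduces to proving $H^{1}_{E}(X,\cO_{X})=0$. The $S_{3}$ hypothesis enters a second, decisive time through local duality: $R$ carries a dualizing complex $\omega_{R}^{\bullet}$ concentrated in degrees $[-d,-3]$ precisely because $\operatorname{depth}R\ge 3$, and combining local duality on $R$ with Grothendieck duality for the proper morphism $\varphi$ gives $H^{1}_{E}(X,\cO_{X})^{\vee}\cong H^{-1}(X,\omega_{X}^{\bullet})$ with $\omega_{X}^{\bullet}=\varphi^{!}\omega_{R}^{\bullet}$, and one must exploit this concentration together with the normality of $X$ to force the group to vanish.

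The main obstacle is exactly this last vanishing $H^{1}_{E}(X,\cO_{X})=0$ in codimension $\ge 3$. The five-term and duality manipulations above are mutually consistent but circular — each only re-expresses $M$ in terms of $H^{1}_{E}(X,\cO_{X})$ or $H^{0}_{\maxm}(M)$ — so genuine input is needed to break the symmetry, and this is where the hypotheses on $Y$ (rather than on $X$, which is only normal) must do real work. I expect it to come either from upgrading $S_{3}$ to a depth estimate on the higher direct image itself, namely that $\myR^{1}\varphi_{*}\cO_{X}$ satisfies $S_{1}$ on $Y$ — incompatible with a nonzero finite-length localization — or from the Sancho de Salas exact sequence of Theorem~\ref{cor:sds-special} applied to a blow-up dominating $X$, which converts the vanishing into the injectivity underlying pseudorationality. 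Throughout, quasi-unmixedness is what guarantees the dualizing complex and all the codimension counts behave well, so that the reductions and the identity $\omega_{X}^{\bullet}=\varphi^{!}\omega_{R}^{\bullet}$ are legitimate; verifying that $S_{3}$ propagates to the required depth bound on $\myR^{1}\varphi_{*}\cO_{X}$ with no hypothesis on $X$ beyond normality is the crux.
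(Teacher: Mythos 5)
First, be aware that the paper contains no proof of Theorem \ref{thm:ishii-yoshida}: it is imported as a black box from \cite{ishii2025vanishinghigherdirectimages} and \cite{RemiLodh}, so your proposal has to stand entirely on its own --- and it does not. Your reductions (the support of $\myR^1\varphi_*\cO_X$ misses codimension $\leq 1$, dies at codimension-$2$ points, and hence localizes to a finite-length module $M \cong H^1_E(X,\cO_X)$ over a local ring of dimension $\geq 3$ with depth $\geq 3$) are reasonable, but the step that remains --- proving $M=0$ --- is precisely the theorem, and you leave it open, as you yourself admit. Neither of your proposed escape routes can close it. The ``injectivity underlying pseudorationality'' is unavailable exactly where you need it: at the codimension-$\geq 3$ point you have localized at, the local ring is \emph{not} assumed pseudorational (pseudorationality is hypothesized only in codimension $2$), so there is no injectivity statement to invoke; transferring the punctured-spectrum hypotheses ($S_3$ plus pseudorationality at nearby codimension-$2$ points) into a vanishing at the closed point is the actual content of Ishii--Yoshida/Lodh. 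The duality route has a foundational defect as well: a Noetherian locally quasi-unmixed ring need not admit a dualizing complex (this requires, e.g., being a quotient of a Gorenstein ring), so the identification $H^1_E(X,\cO_X)^{\vee} \cong H^{-1}(X,\omega_X^{\bullet})$ with $\omega_X^{\bullet}=\varphi^{!}\omega_R^{\bullet}$ cannot even be set up in the stated generality, and in any case you concede the manipulation is circular.

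There is also a secondary but genuine problem earlier in your argument: in the codimension-$2$ step you resolve $X_y$ by citing resolution of singularities for excellent surfaces, but $Y$ is only assumed Noetherian --- excellence is not a hypothesis of the theorem --- so you may not upgrade ``pseudorational'' to ``rational'' via Lipman--Teissier, whose equivalence presupposes that a desingularization exists. Proving $\myR^1\varphi_*\cO_X=0$ over a two-dimensional pseudorational base \emph{without} resolutions is itself nontrivial and is essentially Lodh's contribution in \cite{RemiLodh}. (Your normality claim for $Y$ also needs care when $Y$ has one-dimensional components or closed points of codimension $1$, where the $R_1$ argument via codimension-$2$ specializations breaks down; the theorem is easy there because the fibers are finite, but your write-up should say so rather than assert normality in general.)
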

 Lipman, by the following theorem, related $\Spec \ R$ having rational singularities with resolution being arithmetically Cohen-Macaulay in \cite[Theorem (4.1)]{LipmanCohenMacaulaynessInGradedAlgebras}.

\begin{theorem}(\cite[Theorem 4.1]{LipmanCohenMacaulaynessInGradedAlgebras})\label{thm:lipman-cm}
Let $F$ be a filtration on a local ring $(R, \mathfrak{m})$ with Rees algebra $R_F$ and $X = \Proj(R_F)$. Suppose:
\begin{enumerate}
\item $X$ is Cohen-Macaulay.
\item The natural map $R \to \mathbf{R}\Gamma(X, \mathcal{O}_X)$ is an isomorphism (i.e., $H^0(X, \mathcal{O}_X) = R$ and $H^i(X, \mathcal{O}_X) = 0$ for $i > 0$).
\end{enumerate}
Then for some $e > 0$, the Veronese subalgebra $R_{F^{(e)}}$ is Cohen-Macaulay.
The converse also holds: if $R_F$ is Cohen-Macaulay, then $X$ is Cohen-Macaulay and $R \xrightarrow{\sim} \mathbf{R}\Gamma(X, \mathcal{O}_X)$.
\end{theorem}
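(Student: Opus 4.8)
The plan is to translate Cohen--Macaulayness of the graded ring into the vanishing of graded local cohomology and then read off that vanishing from the geometry of $X = \Proj(R_F)$. Write $S = R_F$ and $d = \dim R$, so that $\dim S = d+1$, $\dim X = d$, and set $\mathfrak{m}_S = \mathfrak{m} \oplus S_+$. The criterion I will use is that $S$ (resp. $R_{F^{(e)}}$) is Cohen--Macaulay if and only if $H^i_{\mathfrak{m}_S}(S) = 0$ (resp. $H^i_{\mathfrak{m}_{S^{(e)}}}(S^{(e)}) = 0$) for all $i < d+1$. Both directions run through the composite-functor spectral sequence
\[
E_2^{p,q} = H^p_{\mathfrak{m}}\big(H^q_{S_+}(S)\big) \Longrightarrow H^{p+q}_{\mathfrak{m}_S}(S),
\]
equivalently the Sancho de Salas sequence of Theorem \ref{cor:sds-special}, together with the identifications $H^q_{S_+}(S)_n \cong H^{q-1}(X, \mathcal{O}_X(n))$ for $q \ge 2$ and the exact sequence $0 \to H^0_{S_+}(S) \to S \to \bigoplus_n H^0(X, \mathcal{O}_X(n)) \to H^1_{S_+}(S) \to 0$ in low cohomological degree. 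Since $\mathfrak{m} \subseteq S_0 = R$, the functor $H^p_{\mathfrak{m}}$ respects the grading, so $H^p_{\mathfrak{m}}(M)_n = H^p_{\mathfrak{m}}(M_n)$, which is what lets me analyse one graded degree at a time.

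For the converse direction, assume $S$ is Cohen--Macaulay. Localizing $S$ at homogeneous primes not containing $S_+$ shows every local ring of $X$ is Cohen--Macaulay, so $X$ is Cohen--Macaulay; the Veronese compatibility $H^i_{\mathfrak{m}_{S^{(e)}}}(S^{(e)}) \cong \big(H^i_{\mathfrak{m}_S}(S)\big)^{(e)}$ then shows each $R_{F^{(e)}}$ is Cohen--Macaulay as well. The cohomological statement comes from the degree-zero part: the vanishing $H^i_{\mathfrak{m}_S}(S)_0 = 0$ for $i \le d$ forces $H^{i-1}(X, \mathcal{O}_X) = R^{i-1}\pi_*\mathcal{O}_X = 0$ for $1 \le i-1 \le d-1$, since (as $\pi$ is birational) these sheaves are supported over the closed point, hence are their own $H^0_{\mathfrak{m}}$ and survive as a quotient of the vanishing abutment, while $R^d\pi_*\mathcal{O}_X = 0$ by Grothendieck vanishing along the fibres and $\pi_*\mathcal{O}_X = R$ follows from the $q \le 1$ terms; this is exactly $R \xrightarrow{\sim} \mathbf{R}\Gamma(X, \mathcal{O}_X)$.

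For the forward direction, assume $X$ is Cohen--Macaulay and $R \xrightarrow{\sim} \mathbf{R}\Gamma(X, \mathcal{O}_X)$. I want to show each defect module $H^i_{\mathfrak{m}_S}(S)$, $i \le d$, is concentrated in finitely many \emph{nonzero} graded degrees; a Veronese of sufficiently high period then annihilates it. In degree $0$, hypothesis (2) kills every contribution: $H^{q-1}(X, \mathcal{O}_X) = 0$ for $q \ge 2$ and $\operatorname{coker}(R \to \Gamma(X, \mathcal{O}_X)) = 0$ dispose of the $q \ge 2$ and $q \le 1$ terms. For $|n|$ large, the intermediate cohomology $H^{q-1}(X, \mathcal{O}_X(n))$ with $1 \le q-1 \le d-1$ vanishes: Serre vanishing for $n \gg 0$, and Serre duality on the Cohen--Macaulay scheme $X$ (the CM hypothesis is essential here, so that duality is governed by the dualizing sheaf rather than a complex) combined with Serre vanishing for $n \ll 0$. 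The one genuinely delicate term is $q = 1$ in negative degrees, where $H^1_{S_+}(S)_n \cong \Gamma(X, \mathcal{O}_X(n)) \cong R$ is nonzero for \emph{all} $n < 0$; applying $H^p_{\mathfrak{m}}$ with $p = i-1 \le d-1$ sends this to $H^p_{\mathfrak{m}}(R)$, which vanishes because $R$, having rational singularities, is Cohen--Macaulay. Collecting these vanishings confines the support of $H^i_{\mathfrak{m}_S}(S)$ to a finite interval $[-N, N]$ missing $0$; choosing $e > N$ gives $\big(H^i_{\mathfrak{m}_S}(S)\big)^{(e)} = 0$ for all $i \le d$, i.e. $R_{F^{(e)}}$ is Cohen--Macaulay.

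The main obstacle is the uniform control of the intermediate local cohomology across all twists simultaneously. Concretely, one must guarantee that $H^{q-1}(X, \mathcal{O}_X(n))$ for $0 < q-1 < d$ is nonzero in only finitely many $n$, and that the infinitely-supported $q=1$ contribution in negative degrees is harmless: the first requires Serre duality in the Cohen--Macaulay (not merely normal) setting, and the second is precisely where the Cohen--Macaulayness of $R$ enters. Pinning down the bound $N$, verifying the identifications of $H^q_{S_+}(S)$ with sheaf cohomology, and checking the commutation of $\mathfrak{m}_S$-local cohomology with the Veronese functor constitute the technical heart of the argument.
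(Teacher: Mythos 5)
First, a point of calibration: the paper does not prove this statement at all---it is quoted verbatim as background from \cite[Theorem 4.1]{LipmanCohenMacaulaynessInGradedAlgebras}---so your proposal can only be compared with Lipman's original argument, which, like yours, runs through graded local cohomology of $S = R_F$, a Sancho de Salas--type comparison, and a large Veronese truncation. Your skeleton (kill degree $0$ using hypothesis (2), kill $|n|\gg 0$, then pick $e$ large) is the right one, but the step where you kill the negative degrees contains a genuine error. You claim that for $n \ll 0$ the intermediate cohomologies $H^{q-1}(X,\mathcal{O}_X(n))$, $1 \le q-1 \le d-1$, vanish by ``Serre duality plus Serre vanishing.'' This is false already in the most classical example: take $R = k[[x,y,z]]$, $F_n = \mathfrak{m}^n$, so $X = \mathrm{Bl}_{\mathfrak{m}}\Spec R$ and hypotheses (1), (2) hold (and $S$ is even Cohen--Macaulay); projecting $X$ to the exceptional $\mathbb{P}^2$ exhibits $X$ as the total space of $\mathcal{O}_{\mathbb{P}^2}(-1)$, whence
\[
H^2\bigl(X,\mathcal{O}_X(n)\bigr) \;\cong\; \bigoplus_{j \ge n} H^2\bigl(\mathbb{P}^2,\mathcal{O}_{\mathbb{P}^2}(j)\bigr) \;\neq\; 0 \quad \text{for every } n \le -3 .
\]
The reason duality cannot give what you want is that Grothendieck duality for the proper map $\pi$ converts $\mathbf{R}\Gamma(X,\mathcal{O}_X(n))$ into $\mathbf{R}\mathrm{Hom}_R(\mathbf{R}\Gamma(X,\omega_X(-n)),\omega_R^{\mydot})$; even when Serre vanishing makes $\mathbf{R}\Gamma(X,\omega_X(-n)) = \pi_*\omega_X(-n)$ a sheaf, the functor $\mathbf{R}\mathrm{Hom}_R(-,\omega_R^{\mydot})$ spreads it over several cohomological degrees unless $\pi_*\omega_X(-n)$ happens to be maximal Cohen--Macaulay over $R$, which is not available. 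What duality \emph{does} give, and what the proof actually needs, is vanishing of cohomology with supports: for $n<0$ one has $S_n = 0$, hence $H^i_{\mathfrak{m}_S}(S)_n \cong H^{i-1}_Z(X,\mathcal{O}_X(n))$ where $Z$ is the closed fibre (using $\Gamma_{\mathfrak{m}}\circ\Gamma(X,-) = \Gamma_Z(X,-)$), and formal local duality over the completion of $R$ gives $H^{i-1}_Z(X,\mathcal{O}_X(n))^{\vee} \cong H^{d-i+1}(X,\omega_X(-n))$, which vanishes for all $i \le d$ and $-n \gg 0$ by Serre vanishing, using only that $X$ is Cohen--Macaulay (so $\omega_X^{\mydot}\simeq \omega_X[d]$) and that $\mathcal{O}_X(1)$ is invertible and ample after a preliminary Veronese. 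In your term-by-term $E_2$ analysis the genuinely nonzero modules $H^{q-1}(X,\mathcal{O}_X(n))$ are destroyed by spectral sequence differentials (in the example above, $d_3$ carries $E_3^{0,3}$ into a subquotient of $H^3_{\mathfrak{m}}(R)$), so no argument asserting term-by-term vanishing can succeed.

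A second, related gap: to handle the $q=1$ column in negative degrees you invoke ``$R$, having rational singularities, is Cohen--Macaulay.'' Rational singularities of $R$ is \emph{not} a hypothesis of this theorem---it is a hypothesis of Theorem~\ref{thm:main-vanishing}, where the theorem is applied---and the statement is true without it; the supports-plus-duality argument just described never uses Cohen--Macaulayness of $R$ (nor the unproved identification $\Gamma(X,\mathcal{O}_X(n)) \cong R$ for $n<0$). The same over-specialization infects your converse direction: the assertion that $H^{i}(X,\mathcal{O}_X)$, $i>0$, is $\mathfrak{m}$-torsion ``since $\pi$ is birational'' is valid only when $\pi$ is an isomorphism off the closed point (e.g.\ for an $\mathfrak{m}$-primary filtration); for a general filtration the non-isomorphism locus is positive-dimensional, and in any case vanishing of the abutment does not force $E_2^{0,q}=0$, because the outgoing differentials $d_r\colon E_r^{0,q}\to E_r^{r,q-r+1}$ need not vanish. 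Both directions should instead be routed through the identification of the graded pieces of $\mathbf{R}\Gamma_{\mathfrak{m}_S}(S)$ with $\mathbf{R}\Gamma_Z(X,-)$ of twists, followed by duality against $\omega_X$---which is precisely Lipman's route, and is also how the paper itself uses the statement in Step 6 of Theorem~\ref{thm:main-vanishing} via Theorem~\ref{cor:sds-special}.
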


\section{Main Theorems}

We now prove our main results.

\begin{theorem}\label{thm:main-vanishing}
Let $(R, \maxm)$ be an excellent, local ring of dimension 3 that has rational singularities. Let $X = \Proj S \xrightarrow{\pi} \Spec R$ be a blow-up, where $S = \bigoplus_{n \ge 0} \mathfrak{a}^n$ for an ideal $\mathfrak{a} \subset R$. Assume that $X$ is Cohen-Macaulay, normal, and has pseudorational singularities in codimension 2.

Then
\[
\myR^i \pi_* \omega_X = 0 \quad \text{for all } i > 0.
\]
\end{theorem}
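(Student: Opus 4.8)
The plan is to combine Grothendieck duality with a comparison against a resolution of singularities, using the two hypotheses on $R$ and on $X$ in complementary roles. Throughout write $Z=\pi^{-1}(\maxm)$ for the closed fibre and note that $X$ is integral of dimension $3$ (the Rees algebra of the normal domain $R$ is a domain). Since $\pi$ is projective and birational, a fibre-dimension count shows that a two-dimensional fibre can occur only over the unique closed point $\maxm$; hence $M:=\myR^2\pi_*\cO_X$ has finite length, and $\myR^i\pi_*\omega_X=0$ automatically for $i\ge 3$. I would first dispose of $i=2$ and then concentrate on $\myR^1\pi_*\omega_X$, which is the delicate term.

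Because $R$ has rational singularities it is excellent, normal, Cohen--Macaulay, satisfies Serre's condition $S_3$, and is pseudorational, hence pseudorational in codimension $2$; so Theorem~\ref{thm:ishii-yoshida} applied to $\pi$ gives $\myR^1\pi_*\cO_X=0$. Thus $\myR\pi_*\cO_X$ has cohomology $R$ in degree $0$, nothing in degree $1$, and the finite-length module $M$ in degree $2$. Grothendieck duality for $\pi$ gives $\myR\pi_*\RsHom_X(\cO_X,\omega_X^{\bullet})\simeq\RHom_R(\myR\pi_*\cO_X,\omega_R^{\bullet})$, which after the Cohen--Macaulay shifts $\omega_X^{\bullet}\simeq\omega_X[3]$ and $\omega_R^{\bullet}\simeq\omega_R[3]$ reads $\myR\pi_*\omega_X\simeq\RHom_R(\myR\pi_*\cO_X,\omega_R)$. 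Feeding the above cohomology into the hyper-$\Ext$ spectral sequence, the only nonzero contributions are $\Ext^0_R(R,\omega_R)=\omega_R$ in total degree $0$ and $\Ext^3_R(M,\omega_R)=M^{\vee}$ in total degree $1$. As there is no term in total degree $\ge 2$, I immediately get $\myR^i\pi_*\omega_X=0$ for $i\ge 2$, while the surviving $d_3$-differential identifies $\myR^1\pi_*\omega_X\simeq\coker\!\big(\omega_R\xrightarrow{\ \delta\ }M^{\vee}\big)$. It remains to prove $\myR^1\pi_*\omega_X=0$, i.e.\ that $\delta$ is surjective.

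For this crux I would pass to a resolution, which exists in dimension $3$ in all characteristics: choose $\rho\colon\tX\to X$ with $\tX$ regular and set $\psi=\pi\circ\rho\colon\tX\to\Spec R$, a resolution of $\Spec R$. Since $R$ is rational, $\myR\psi_*\cO_{\tX}\simeq R$ (rationality being independent of the chosen resolution), and dualising this via Grothendieck duality and Cohen--Macaulayness yields $\myR\psi_*\omega_{\tX}\simeq\omega_R$; in particular $\myR^i\psi_*\omega_{\tX}=0$ for $i>0$. Here the pseudorationality of $X$ in codimension $2$ enters: at every point of codimension $\le 2$ the scheme $X$ is either regular or a two-dimensional rational singularity, so the same duality argument applied in dimension two gives $\myR^j\rho_*\omega_{\tX}=0$ for $j\ge 1$ and makes the trace inclusion $\rho_*\omega_{\tX}\hookrightarrow\omega_X$ an isomorphism, away from finitely many closed points, all lying in $Z$. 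Hence $N^j:=\myR^j\rho_*\omega_{\tX}$ $(j\ge 1)$ and $Q:=\omega_X/\rho_*\omega_{\tX}$ are finite length and supported at closed points, so $\myR^i\pi_*N^j=0=\myR^i\pi_*Q$ for $i>0$. Pushing forward the truncation triangle $\rho_*\omega_{\tX}\to\myR\rho_*\omega_{\tX}\to\tau_{\ge 1}\myR\rho_*\omega_{\tX}\xrightarrow{+1}$ gives a triangle $\myR\pi_*(\rho_*\omega_{\tX})\to\omega_R\to K\xrightarrow{+1}$ with $K=\myR\pi_*\tau_{\ge 1}\myR\rho_*\omega_{\tX}$ having cohomology concentrated in degrees $\ge 1$ (it is assembled from the $\pi_*N^j$), so $\mathcal{H}^0(K)=0$. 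The long exact sequence, together with $\mathcal H^1(\omega_R)=0$, then forces $\myR^1\pi_*(\rho_*\omega_{\tX})=0$. Finally, applying $\myR\pi_*$ to $0\to\rho_*\omega_{\tX}\to\omega_X\to Q\to 0$ and using $\myR^1\pi_*Q=0$ produces a surjection $\myR^1\pi_*(\rho_*\omega_{\tX})\twoheadrightarrow\myR^1\pi_*\omega_X$, whence $\myR^1\pi_*\omega_X=0$.

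The main obstacle is exactly this last passage: transferring the vanishing $\myR^i\psi_*\omega_{\tX}=0$, which holds because $R$ is rational, across the singular modification $X$. What makes it go through is the finite-length control on both $\omega_X/\rho_*\omega_{\tX}$ and the $N^j$, and that control is precisely what codimension-$2$ pseudorationality of $X$ buys: it confines the discrepancy between $X$ and $\tX$ to closed points of $Z$, so the rows $j\ge 1$ cannot contaminate $\mathcal H^1$. I expect the remaining care to be in checking that $\rho$ can be taken to dominate $X$ while resolving $\Spec R$, and that the codimension-$2$ comparison is literally Grauert--Riemenschneider vanishing for two-dimensional rational singularities, which is characteristic-free. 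A more computational alternative would instead run through the Sancho de Salas sequence of Theorem~\ref{cor:sds-special}, reducing $\myR^1\pi_*\omega_X=0$ to $H^2_Z(X,\cO_X)=0$ and then to an analysis of $[H^3_{\maxm_S}(S)]_0$ via Theorem~\ref{thm:lipman-cm}; I find the duality-and-resolution route cleaner and less sensitive to the non-Cohen--Macaulay locus of $S$.
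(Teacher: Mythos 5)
Your proof is correct, but it takes a genuinely different route from the paper's. The paper works \emph{over} $X$: it chooses a dominating resolution $\phi\colon Y\to X$, uses the codimension-$2$ pseudorationality of $X$ (via flat base change) to force $\myR^i\phi_*\cO_Y$ to have zero-dimensional support, runs the Leray spectral sequence together with Theorem~\ref{thm:ishii-yoshida} applied to $\phi$ to conclude $H^i(X,\cO_X)=0$ for $i>0$, then invokes Lipman's criterion (Theorem~\ref{thm:lipman-cm}) to replace $S$ by a Cohen--Macaulay Veronese subalgebra, and finally converts $H^i_Z(X,\cO_X)=0$ into $H^{3-i}(X,\omega_X)=0$ via the Sancho de Salas sequence (Theorem~\ref{cor:sds-special}) and local duality. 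You instead apply Theorem~\ref{thm:ishii-yoshida} directly to $\pi$ (using that $R$ is rational, hence $S_3$, quasi-unmixed, and---after localizing a resolution---pseudorational in codimension $2$), feed the resulting shape of $\myR\pi_*\cO_X$ into relative Grothendieck duality to kill $\myR^i\pi_*\omega_X$ for $i\geq 2$, and eliminate the remaining term $\myR^1\pi_*\omega_X$ by a comparison with a resolution $\rho\colon\tX\to X$, where codimension-$2$ pseudorationality of $X$ confines $\omega_X/\rho_*\omega_{\tX}$ and $\myR^{\geq 1}\rho_*\omega_{\tX}$ to finite length. Your route avoids the graded machinery entirely (no Rees algebra, no Veronese, no Sancho de Salas) and never uses that $\pi$ is a blow-up, so it applies verbatim to any projective birational $\pi$; what it does not yield are the byproducts that the paper reuses elsewhere, namely $\myR\Gamma(X,\cO_X)\simeq R$ (the rationality of $X$ asserted in Theorem A and proved in Theorem~\ref{thm:stability-modification}) and the Cohen--Macaulayness of $S^{(N)}$, which Corollary~\ref{hara-watanabe-yoshida} quotes directly from Step 5 of the paper's proof---in particular, you never show $\myR^2\pi_*\cO_X=0$, only that it has finite length. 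Two small points to tighten: the assertion ``$R$ is pseudorational, hence pseudorational in codimension $2$'' should be justified by localizing a resolution of $\Spec R$ at height-$2$ primes and using that rational equals pseudorational in dimension $2$, exactly as in the paper's Step 2; and both $\myR\psi_*\cO_{\tX}\simeq R$ and the codimension-$2$ identification $\myR\rho_*\omega_{\tX}\simeq\omega_X$ rest on independence of the chosen resolution, which in arbitrary characteristic is the Chatzistamatiou--R\"ulling theorem and deserves an explicit citation.
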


\begin{proof}
We follow the strategy of \cite{Koley-Kummini21} and proceed through six steps.

\medskip\noindent
\textbf{Step 1:} Since $R$ has rational singularities and is of dimension 3, there exists a dominating resolution of singularities $\phi: Y \to X$.

\[
\begin{tikzcd}
& Y \arrow[dl, "\phi"'] \arrow[dr, "f"] & \\
X \arrow[rr, "\pi"'] & & \Spec R
\end{tikzcd}
\]
By definition of rational singularities
\[
\myR^i (\pi \circ \phi)_* \mathcal{O}_Y = H^i(Y, \mathcal{O}_Y) = 0 \quad \text{for all } i > 0.
\]

\medskip\noindent
\textbf{Step 2:} We have two key properties of the resolution $\phi: Y \to X$:

\textbf{(a)} Since $\phi$ is a resolution and $X$ is normal, we have $\phi_* \mathcal{O}_Y = \mathcal{O}_X$.

\textbf{(b)} We prove that $\myR^i \phi_* \mathcal{O}_Y$ has zero-dimensional support for $i > 0$, if they are non-zero.
Let $x \in X$ be a point with $\dim \mathcal{O}_{X,x} = 2$. By assumption, $X$ has pseudorational singularities in codimension 2. By \cite[Page 103, Example (a)]{LipmanTeissierPseudorationallocalringsandatheoremofBrianconSkoda}, we have that pseudorational implies rational in dimension 2.

% Consider the base change diagram:
% \[
% \begin{tikzcd}
% Y_x = Y \times_X \Spec \mathcal{O}_{X,x} \arrow[r, "g"] \arrow[d] & \Spec \mathcal{O}_{X,x} \arrow[d] \\
% Y \arrow[r, "\phi"] & X
% \end{tikzcd}
% \]
Since $\phi$ is proper and $\Spec \ \cO_{X,x} \to X$ is flat, by flat base change \cite{Hartshorne},
\[
(\myR^i \phi_* \mathcal{O}_Y)_x \cong \myR^i g_* \mathcal{O}_{Y_x}.
\]
where $g: Y_x := Y \times_X \Spec \ \mathcal{O}_{X,x} \to \operatorname{Spec} \mathcal{O}_{X,x}$ is a resolution of a two-dimensional rational singularity. So $\myR^i g_* \mathcal{O}_{Y_x} = 0$ for $i > 0$, and hence, $(\myR^i \phi_* \mathcal{O}_Y)_x = 0$ for all $x$ with $\dim \mathcal{O}_{X,x} = 2$.

Therefore, $\Supp(\myR^i \phi_* \mathcal{O}_Y) \subseteq \{x \in X : \dim \mathcal{O}_{X,x} \geq 3\}$. Since $X$ is three-dimensional and excellent, this set is finite, and hence zero-dimensional.

\medskip\noindent
\textbf{Step 3:} Consider the Leray spectral sequence for the composition $\pi \circ \phi$:
\[
E_2^{p,q} = \myR^q \pi_* (\myR^p \phi_* \mathcal{O}_Y) \implies \myR^{p+q} (\pi \circ \phi)_* \mathcal{O}_Y = H^{p+q}(Y, \mathcal{O}_Y).
\]

From Step 2(b), we have that for $p > 0$, $\myR^p \phi_* \mathcal{O}_Y$ has zero-dimensional support. Therefore, $H^i(X, \myR^p \phi_* \mathcal{O}_Y) = 0$ for $i > 0$.
Thus $$E_2^{p,q} = \myR^q \pi_* (\myR^p \phi_* \mathcal{O}_Y) = H^q(X, \myR^p \phi_* \mathcal{O}_Y) = 0$$ for $p > 0$ and $q > 0$.

The five-term exact sequence (see \cite[Proposition F.105]{Gortz-Wedhorn-2}) from this spectral sequence is
\[
0 \to \myR^1 \pi_* (\phi_* \mathcal{O}_Y) \to \myR^1 (\pi \circ \phi)_* \mathcal{O}_Y \to \pi_* (\myR^1 \phi_* \mathcal{O}_Y) \to \myR^2 \pi_* (\phi_* \mathcal{O}_Y) \to \myR^2 (\pi \circ \phi)_* \mathcal{O}_Y.
\]

Using
\begin{itemize}
\item $\phi_* \mathcal{O}_Y = \mathcal{O}_X$ (from Step 2(a)),
\item $\myR^i (\pi \circ \phi)_* \mathcal{O}_Y = 0$ for $i = 1,2$ (from Step 1),
\item $\pi_* (\myR^1 \phi_* \mathcal{O}_Y) = H^0(X, \myR^1 \phi_* \mathcal{O}_Y)$,
\end{itemize}
we obtain
\[
0 \to H^1(X, \mathcal{O}_X) \to 0 \to H^0(X, \myR^1 \phi_* \mathcal{O}_Y) \to H^2(X, \mathcal{O}_X) \to 0.
\]
We immediately get $H^1(X, \mathcal{O}_X) = 0$, and $H^0(X, \myR^1 \phi_* \mathcal{O}_Y) \cong H^2(X, \mathcal{O}_X)$.

Now, by Theorem \ref{thm:ishii-yoshida} (\cite{ishii2025vanishinghigherdirectimages}), we have $\myR^1 \phi_* \mathcal{O}_Y = 0$, implying $H^2(X, \mathcal{O}_X)=0$ as well. Note that we already have $H^3(X, \mathcal{O}_X)=0$ as the closed fibers of $\pi: X \rtarr \Spec \ R$ have dimension $\leq 2$, see \cite[Corollaire (4.2.2)]{EGAIII}.

\medskip\noindent
\textbf{Step 5:} After Step 4, we are in the situation of Theorem \ref{thm:lipman-cm} (\cite{LipmanCohenMacaulaynessInGradedAlgebras}). Therefore, there exists $N \gg 0$ such that the Veronese subring $S^{(N)} = \bigoplus_{n \ge 0} \mathfrak{a}^{nN}$ is Cohen-Macaulay. Since $X = \Proj S \cong \Proj S^{(N)}$, we may replace $S$ by $S^{(N)}$ and assume henceforth that $S$ itself is Cohen-Macaulay.

\medskip\noindent
\textbf{Step 6:} Let $d = \dim X = 3$. Apply the SdS sequence from Theorem \ref{cor:sds-special}:
\[
\cdots \to H^i_{\mathfrak{m}_S}(S) \to \bigoplus_{n \ge 0} H^i_{\mathfrak{m}}(\mathfrak{a}^n) \to \bigoplus_{n \in \mathbb{Z}} H^i_Z(X, \mathcal{O}_X(n)) \to H^{i+1}_{\mathfrak{m}_S}(S) \to \cdots
\]
where $\mathfrak{m}_S = \mathfrak{m} \oplus \bigoplus_{n \ge 1} \mathfrak{a}^n$ and $Z = \pi^{-1}(\mathfrak{m})$.

Since $S$ is Cohen-Macaulay of dimension $d+1 = 4$, we have $H^i_{\mathfrak{m}_S}(S) = 0$ for $i < 4$.
Thus, for $i < 3$, we have,
\[
0 \to \bigoplus_{n \ge 0} H^i_{\mathfrak{m}}(\mathfrak{a}^n) \xrightarrow{\cong} \bigoplus_{n \in \mathbb{Z}} H^i_Z(X, \mathcal{O}_X(n)) \to 0
\]
For $n = 0$ and $i<3$, $H^i_{\mathfrak{m}}(R) = 0$ (since $R$ is Cohen-Macaulay), so $H^i_Z(X, \mathcal{O}_X) = 0$.

\noindent By Grothendieck duality (see \cite[Lemma (4.2)]{LipmanCohenMacaulaynessInGradedAlgebras},
\[
H^i_Z(X, \mathcal{O}_X) \cong \operatorname{Hom}_R(H^{3-i}(X, \omega_X), E(R/\mathfrak{m}))= 0
\]
This immediately gives us
\[
H^{3-i}(X, \omega_X) = \myR^{3-i}\pi_*\omega_X = 0, \ \forall i<3.
\]

This completes the proof of Theorem \ref{thm:main-vanishing}.
\end{proof}

\subsection{Pseudorational Modification}

We now prove that the vanishing property is stable under further ``pseudorational in codimension 2" modifications.

\begin{theorem}\label{thm:stability-modification}

Let $Y \xrightarrow{\psi} X \xrightarrow{\pi} \Spec R$, where $R$ is of dimension $3$ and has rational singularities. Assume the following.
\begin{enumerate}
\item $X$ has pseudorational singularities in codimension $2$;
\item $X$ is Cohen-Macaulay;
\item $Y$ is a dominating resolution of singularities;
\item $\pi$ is a blow-up of $\Spec R$.
\end{enumerate}
Then, $X$ has rational singularities. In particular, $\myR^i \psi_* \omega_Y = 0$ for all $i > 0$. 
\end{theorem}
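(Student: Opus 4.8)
The plan is to reduce everything to the scalar vanishing $\myR^i\psi_*\mathcal{O}_Y=0$ for $i>0$: together with $\psi_*\mathcal{O}_Y=\mathcal{O}_X$ (which holds because $X$ is normal and $\psi$ is proper birational) and the standing facts that $X$ is excellent, Cohen--Macaulay and admits a dualizing complex, this is exactly the statement that $X$ has rational singularities in the sense of Definition \ref{def:rational}. The engine is the Leray spectral sequence of the composite $\pi\circ\psi\colon Y\to\Spec R$, fed by the cohomology vanishing $H^i(X,\mathcal{O}_X)=0$ that is already produced inside the proof of Theorem \ref{thm:main-vanishing}.

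First I would assemble three inputs. (i) Since the hypotheses imposed on $\pi\colon X\to\Spec R$ are precisely those of Theorem \ref{thm:main-vanishing}, its proof (Steps 3--4) yields $H^i(X,\mathcal{O}_X)=0$ for all $i>0$. (ii) Because $Y$ is a dominating resolution and $\pi$ is a blow-up, the composite $\pi\circ\psi\colon Y\to\Spec R$ is again a resolution of $\Spec R$; as $R$ has rational singularities, and the defining vanishing is independent of the chosen resolution, $H^i(Y,\mathcal{O}_Y)=\myR^i(\pi\circ\psi)_*\mathcal{O}_Y=0$ for all $i>0$. (iii) Exactly as in Step 2(b) of Theorem \ref{thm:main-vanishing}, flat base change along $\Spec\mathcal{O}_{X,x}\to X$ at a point $x$ with $\dim\mathcal{O}_{X,x}=2$ identifies $(\myR^p\psi_*\mathcal{O}_Y)_x$ with a higher direct image of a resolution of the two-dimensional rational singularity $\mathcal{O}_{X,x}$, which vanishes; hence $\myR^p\psi_*\mathcal{O}_Y$ has zero-dimensional (finite) support for every $p>0$.

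Then I would run $E_2^{p,q}=H^q(X,\myR^p\psi_*\mathcal{O}_Y)\Rightarrow H^{p+q}(Y,\mathcal{O}_Y)$. The entire region $q>0$ vanishes: for $p=0$ by input (i), and for $p>0$ because a coherent sheaf with finite support lives on an affine zero-dimensional subscheme and so carries no higher cohomology. The spectral sequence therefore degenerates onto its bottom row, giving $H^0(X,\myR^n\psi_*\mathcal{O}_Y)\cong H^n(Y,\mathcal{O}_Y)$, which is $0$ for $n>0$ by input (ii). Since $\myR^n\psi_*\mathcal{O}_Y$ has finite support, the vanishing of its global sections forces the sheaf itself to be zero. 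Thus $\myR^i\psi_*\mathcal{O}_Y=0$ for all $i>0$, and $X$ has rational singularities.

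For the final clause, once $\myR\psi_*\mathcal{O}_Y\simeq\mathcal{O}_X$ is in hand, Grothendieck duality for the proper morphism $\psi$ gives $\myR\psi_*\omega_Y^\mydot\simeq\RsHom_X(\myR\psi_*\mathcal{O}_Y,\omega_X^\mydot)\simeq\omega_X^\mydot$; since $Y$ is regular and $X$ is Cohen--Macaulay, both of dimension $3$, one has $\omega_Y^\mydot=\omega_Y[3]$ and $\omega_X^\mydot=\omega_X[3]$, whence $\myR\psi_*\omega_Y\simeq\omega_X$ and in particular $\myR^i\psi_*\omega_Y=0$ for all $i>0$. I expect the only genuinely delicate point to be input (iii) combined with the degeneration bookkeeping, namely confirming the finite support of the higher direct images and that passage to global sections loses no information there; everything else is either imported verbatim from Theorem \ref{thm:main-vanishing} or is a formal consequence of duality.
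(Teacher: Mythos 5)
Your proof is correct, and it takes a genuinely (if mildly) different route at the one delicate point. The paper's own proof does not import $H^i(X,\cO_X)=0$ from Theorem \ref{thm:main-vanishing}: it leaves the entries $E_2^{0,q}=H^q(X,\cO_X)$ as potentially nonzero, so its Leray argument only kills $\myR^2\psi_*\cO_Y$ and $\myR^3\psi_*\cO_Y$ --- the possibly nonzero differential $d_2\colon H^0(X,\myR^1\psi_*\cO_Y)\to H^2(X,\cO_X)$ obstructs the case $i=1$, which the paper settles instead by a second appeal to the Lodh/Ishii--Yoshida theorem (Theorem \ref{thm:ishii-yoshida}). You import $H^i(X,\cO_X)=0$ for $i>0$ from Steps 3--4 of the proof of Theorem \ref{thm:main-vanishing}, which is legitimate since the hypotheses on $\pi\colon X\to \Spec R$ there coincide with the present ones (with $\psi$ itself serving as the dominating resolution); this wipes out the entire region $q>0$ of the $E_2$-page, the spectral sequence collapses onto the row $q=0$, and all $\myR^i\psi_*\cO_Y$ with $i>0$ die simultaneously, the skyscraper argument (finite support plus vanishing global sections) being common to both proofs. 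What each buys: your version is shorter, treats $i=1,2,3$ uniformly, and makes no direct use of Theorem \ref{thm:ishii-yoshida} inside this proof --- though that input is still present, hidden in the derivation of $H^2(X,\cO_X)=0$ in the earlier proof; the paper's version leans on the earlier proof only for the finite-support step (Step 2) and is otherwise self-contained, at the price of quoting the external vanishing theorem a second time. The remaining ingredients agree: finite support of the higher direct images via pseudorationality in codimension $2$, $H^{>0}(Y,\cO_Y)=0$ from rationality of $R$, and Grothendieck duality for the final clause (your formulation via $\omega_Y^{\mydot}=\omega_Y[3]$ and $\omega_X^{\mydot}=\omega_X[3]$ is the same computation the paper writes directly with $\omega_Y$ and $\omega_X$). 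One shared caveat: both you and the paper use $\psi_*\cO_Y=\cO_X$, i.e.\ the normality of $X$, which the theorem's hypothesis list leaves implicit.
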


\begin{proof}
Since $R$ has rational singularities and $\pi \circ \psi: Y \to \Spec R$ is a resolution of singularities, we have:
\[
\myR^i (\pi \circ \psi)_* \cO_Y = 0 \quad \text{for all } i > 0.
\]
Since $\Spec R$ is affine, \cite[Proposition III.8.1]{hartshorne_algebraic_1977} implies that,
\[
H^i(Y, \cO_Y) = 0 \quad \text{for all } i > 0.
\]
By repeating Step 2 in the proof of Theorem \ref{thm:main-vanishing} we get the support of $\myR^p \psi_* \cO_Y$ is at most zero-dimensional. Thus,
\[
H^q(X, \myR^p \psi_* \cO_Y) = 0 \ \mathrm{for} \ \ p,q > 0
\]
We will show that for $p > 0$,  $\myR^p \psi_* \cO_Y = 0$ by analyzing the Leray spectral sequence of $\psi$: 
\[
E_2^{p,q} = H^q(X, \myR^p \psi_* \cO_Y) \Rightarrow H^{p+q}(Y, \cO_Y).
\]
We know that $E_2^{p,q} = 0$ for $p > 0$ and $q > 0$. On the other hand, $E_2^{0,q} = H^q(X, \psi_* \cO_Y) = H^q(X, \cO_X)$ for $p = 0$
and $E_2^{p,0} = H^0(X, \myR^p \psi_* \cO_Y)$ for $q = 0$ may be nonzero.
Note that $H^{p + q}(Y, \cO_Y) = 0$ for $p + q > 0$, so that $E_\infty^{p, q} = 0$ for all $p + q  > 0$. To show vanishing, we will check that the spectral sequence degenerates at the 2nd page. We note that the differential of $E_2$ has bidegree $(-1, 2)$, so the only possibly non-zero differential on the second page is $$E_2^{1, 0} = H^0(X, \myR^1\psi_* \cO_Y) \stackrel{d_2}\longrightarrow H^2(X, \cO_X) = E_2^{0,2}.$$ Thus, $E_\infty^{p, q} = E^{p, q}_2$ for $(p, q) \neq (1,0), (0,2)$. Hence, $$H^0(X, \myR^2_* \psi_* \cO_Y)  = E_2^{2, 0}= E_{\infty}^{2, 0} =  0 \text{ and }H^0(X, \myR^3_* \psi_* \cO_Y)  = E_2^{3, 0}= E_{\infty}^{3, 0} = 0,$$ so by Step 2 we have $\myR^2\psi_* \cO_Y = \myR^3 \psi_* \cO_Y = 0$. Lastly, since $X$ is Cohen-Macaulay, we conclude from Theorem \ref{thm:ishii-yoshida} that $\myR^1\psi_* \cO_Y = 0$. Thus, the map $\cO_X \to \myR \psi_* \cO_Y$ is a quasi-isomorphism and $X$ has rational singularities. 
By Grothendieck duality,
\[
\myR\psi_*\myR\mathcal{H}om_Y(\cO_Y, \omega_Y) \cong \myR\mathcal{H}om_X(\myR\psi_*\cO_Y, \omega_X)
\]
The left side is $\myR\psi_*\omega_Y$. The right side is $\myR\mathcal{H}om_X(\cO_X, \omega_X) = \omega_X$ since $\myR^i\psi_*\cO_Y = 0$ for $i > 0$. Therefore $\myR\psi_*\omega_Y \cong  \omega_X$ i.e, $\myR^i\psi_*\omega_Y = 0$ for all $i > 0$.
\end{proof}
%We will now compute $E^{p, q}_{\infty}$ in a differents way for some $p + q > 0$ to show vanishing.
As a quick application, we get the following 3-dimensional analog of a result of Hara, Watanabe and Yoshida, see \cite[Theorem 3.5]{HWY-02F-Rational}). 

\begin{corollary}(\cite[Theorem 3.5]{HWY-02F-Rational})\label{hara-watanabe-yoshida}
    Let $(R, \maxm)$ be an $F$-finite, local ring of characteristic $p$ and dimension 3 that has rational singularities and $\mathfrak{a} \subset R$ is an $\frm$-primary ideal. Suppose, the blow up along $\fra$, $X = \Proj S \xrightarrow{\pi} \Spec R$ is a resolution of singularities, where $S = \bigoplus_{n \ge 0} \mathfrak{a}^n$. Then $S^{(N)}$ is $F$-rational for all $N\gg 0.$
\end{corollary}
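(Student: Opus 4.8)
The plan is to feed the Grauert--Riemenschneider vanishing of Theorem \ref{thm:main-vanishing} into the argument of \cite{HWY-02F-Rational} exactly where that proof needs a vanishing theorem, and to extract Cohen--Macaulayness from Theorem \ref{thm:lipman-cm}. First I would record the free consequences of the hypotheses. Since $\pi\colon X\to\Spec R$ is a resolution, $X$ is regular, hence Cohen--Macaulay, normal, and pseudorational in every codimension; so every hypothesis of Theorems \ref{thm:main-vanishing} and \ref{thm:lipman-cm} is automatic. Moreover $\pi$ is itself a resolution of the rational singularity $R$, whence $\myR^i\pi_*\cO_X=0$ for $i>0$ and, as $\Spec R$ is affine, $H^i(X,\cO_X)=0$ for $i>0$. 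Finally, because $\fra$ is $\maxm$-primary, $\pi$ is an isomorphism over $\Spec R\setminus\{\maxm\}$; with $X$ regular this forces $R$ to be regular on its punctured spectrum, i.e. $R$ has an isolated (rational) singularity.

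Next I would pin down Cohen--Macaulayness of the Veronese. Theorem \ref{thm:main-vanishing} gives $\myR^i\pi_*\omega_X=0$ for $i>0$, and combined with $H^i(X,\cO_X)=0$ this places us in the setting of Theorem \ref{thm:lipman-cm}. Hence $A:=S^{(N)}=\bigoplus_{n\ge 0}\fra^{nN}$ is Cohen--Macaulay for all $N\gg 0$; fix such an $N$, so $\Proj A\cong X$ and $\dim A=\dim X+1=4$. To locate the canonical module I would feed the same vanishing into the Sancho de Salas sequence of Theorem \ref{cor:sds-special} for the $N$-th Veronese: the surjectivity of $H^{3}_{\maxm}(\fra^{nN})\to H^{3}_{Z}(X,\cO_X(-nNE))$ together with $\myR^i\pi_*\omega_X=0$ yields, after Matlis duality, $\omega_A\cong\bigoplus_{n>0}H^0(X,\omega_X(-nNE))$. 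In particular $\omega_A$ is concentrated in strictly positive degrees, so the $a$-invariant satisfies $a(A)<0$.

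With Cohen--Macaulayness and $a(A)<0$ in hand, the remaining task is to upgrade to $F$-rationality, which amounts to $F$-rationality on the punctured spectrum together with the tight-closure condition $0^*_{H^{4}_{\mathfrak{M}}(A)}=0$ at the irrelevant maximal ideal $\mathfrak{M}$. The punctured-spectrum part is cheap: localizing $A$ at a homogeneous prime not containing $A_+$ produces an affine chart of the regular scheme $X$, while localizing at a prime containing $A_+$ but lying over a nonmaximal prime of $R$ produces a localization of $R$ away from $\maxm$, regular by the first paragraph; hence $A$ is $F$-rational off $\mathfrak{M}$. For the vertex I would invoke \cite[Theorem 3.5]{HWY-02F-Rational}, whose argument runs through exactly the Sancho de Salas description of $\omega_A$, the negativity of $a(A)$, and a Frobenius-action analysis of $H^4_{\mathfrak{M}}(A)$, and whose only geometric input in characteristic $p$ and dimension $3$ is the vanishing $\myR^i\pi_*\omega_X=0$ now supplied by Theorem \ref{thm:main-vanishing}.

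I expect the main obstacle to be precisely this vertex step: controlling the Frobenius action on the top local cohomology $H^4_{\mathfrak{M}}(A)$ well enough to conclude that $0$ is tightly closed there. The negative $a$-invariant guarantees $H^4_{\mathfrak{M}}(A)$ is supported in negative degrees, but one still needs $F$-injectivity in top degree, which is the heart of the Hara--Watanabe--Yoshida computation and not a formal consequence of the vanishing theorem. A secondary point to keep honest is the base hypothesis of \cite{HWY-02F-Rational}: their argument is phrased for an $F$-rational base, and in characteristic $p$ this does not follow formally from rational singularities, so I would either carry $F$-rationality of the isolated singularity $R$ as the genuine working hypothesis or verify it directly before applying the criterion. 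Finally, since $S^{(N)}$ is only Cohen--Macaulay for $N$ in a suitable range, the conclusion is legitimately asymptotic in $N$, matching the statement.
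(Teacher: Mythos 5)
Your reduction of the problem is sound and matches the paper's strategy up to a point: Cohen--Macaulayness of $S^{(N)}$ for $N\gg 0$ via Theorem \ref{thm:main-vanishing} and Theorem \ref{thm:lipman-cm}, the test-element/punctured-spectrum part (the paper does this by noting each chart $S^{(N)}_{ft^N}$ is regular, so some power of $f$ is a test element and $S^{(N)}_+\subseteq\sqrt{\tau(S^{(N)})}$), and the identification $H^4_{\frm_{S^{(N)}}}(S^{(N)})\cong\bigoplus_{n<0}H^3_E(X,\cO_X(nN))$ from Theorem \ref{cor:sds-special}. But the proposal has a genuine gap exactly where you yourself flag ``the main obstacle'': you never prove that Frobenius acts injectively on $H^4_{\frm_{S^{(N)}}}(S^{(N)})$, and your two escape routes are both inadmissible. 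Invoking \cite[Theorem 3.5]{HWY-02F-Rational} as a black box is circular --- that theorem is precisely the result being generalized here, and its hypotheses (an $F$-rational base, in the characteristic-$p$ sense) are not satisfied in the present setting. Your fallback of ``carrying $F$-rationality of $R$ as the genuine working hypothesis'' changes the statement being proved: the corollary assumes only rational singularities, and in characteristic $p$ this does not give $F$-rationality.

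The missing idea, which is how the paper closes the argument, is to run the \emph{proof} of \cite[Theorem 3.5]{HWY-02F-Rational} rather than cite its statement: after reducing via \cite[Lemma 3.4]{HWY-02F-Rational} to $F$-injectivity on $H^4_{\frm_{S^{(N)}}}(S^{(N)})$, one must show that
\[
F\colon H^3_E(X,\cO_X(nN))\longrightarrow H^3_E(X,\cO_X(pnN))
\]
is injective for all $n<0$. This follows from Hara's Frobenius-injectivity criterion \cite[Proposition 3.5]{hara_characterization_1998}, which converts the question into the vanishing of $H^j_E(X,\Omega^i_X(\log E)(nN))$ for $i+j=2$, $i>0$, and of $H^j_E(X,\Omega^i_X(\log E)(pnN))$ for $i+j=3$, $i>0$. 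These vanishings hold because $H^0_E$ of a locally free sheaf is zero and all the remaining groups die by Serre vanishing once $N\gg 0$. Note in particular that the largeness of $N$ is used \emph{twice} --- once for Cohen--Macaulayness and once for Serre vanishing --- whereas your proposal only accounts for the first use; and contrary to your claim that the ``only geometric input'' is $\myR^i\pi_*\omega_X=0$, these log-differential vanishings are an independent, essential input. Without this step your argument does not reach the conclusion.
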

\begin{proof}
    From Step 5 in the proof of Theorem \ref{thm:main-vanishing} we get that $S^{(N)}$ is Cohen-Macaulay for all $N\gg 0.$ We follow \cite[Proof of Theorem 3.5]{HWY-02F-Rational} for the rest. First note that, $\forall \ 0\neq ft^N \in \fra^Nt^N$, $S^{(N)}_{ft^N}$ is regular. So by \cite{HochsterHunekeFRegularityTestElementsBaseChange}, some power of $f$ is a test element, giving $S_+^{(N)} \subseteq \sqrt{\tau(S^{(N)})}.$ Then, by \cite[Lemma 3.4]{HWY-02F-Rational}, we just need to show that the Frobenius map acts injectively on $H^4_{\frm_{S^{(N)}}}(S^{(N)})$.  Note that, by Theorem \ref{cor:sds-special}, we have $$H^{4}_{\maxm_{S^{(N)}}}(S^{(N)}) \cong \oplus_{n < 0}H^3_E(X, \cO_X(nN)) .$$ So, we need to show that for all $n<0,$ $$F: H^3_E(X, \cO_X(nN)) \longrightarrow H^3_E(X, \cO_X(pnN)) \mathrm{ \ \ is \ injective .}$$ By \cite[Proposition 3.5]{hara_characterization_1998}, injectivity follows if we can show the following two vanishings:
    \begin{enumerate}
        \item $H^j_E(X, \Omega_X^i(\log \ E)(nN)) = 0$ for $i + j = 2$ and $i>0$,
        \item $H^j_E(X, \Omega_X^i(\log \ E)(pnN)) = 0$ for $i+j = 3$ and $i>0$
    \end{enumerate}
Now note that $H^0_E$ of locally free sheaves are always $0$, and in all other cases, we win by Serre vanishing as $N \gg 0.$ So we get $S^{(N)}$ is $F$-rational for all $N\gg 0.$
\end{proof}

\begin{theorem}\label{thm:modificationandrationality}
Suppose $R$ is of dimension $3$ and has rational singularities. Let $X$ be a blow-up of $\Spec R$ and $X$ is normal, has pseudorational singularities in codimension 2 and is Cohen-Macaulay. Then for any projective, birational morphism $\phi:W\to X$ with $W$ normal, pseudorational in codimension 2 and Cohen-Macaulay, we have $\myR^{i}\phi_{*}\omega_{W}=0$ for $i>0$.
\end{theorem}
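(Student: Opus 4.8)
The plan is to deduce the statement from two applications of Theorem \ref{thm:main-vanishing}, combined with a base-change analysis showing that the sheaves $\myR^i\phi_*\omega_W$ with $i>0$ are supported on finitely many closed points. Write $g=\pi\circ\phi\colon W\to\Spec R$ for the composite, which is again projective and birational. First I would record that $g$ is itself a blow-up of $\Spec R$ along some ideal $\mathfrak b\subseteq R$, using the standard fact that a projective morphism which is an isomorphism over a dense open subscheme is a blow-up (see \cite[II.7]{hartshorne_algebraic_1977}). Since $W$ is normal, Cohen-Macaulay and pseudorational in codimension $2$, Theorem \ref{thm:main-vanishing} applies verbatim to $g$ and yields $\myR^i(\pi\circ\phi)_*\omega_W=0$ for all $i>0$. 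The same theorem applied to $\pi$ gives $\myR^i\pi_*\omega_X=0$ for $i>0$; moreover, arguing exactly as in Theorem \ref{thm:stability-modification}, $X$ has rational singularities, so the trace map $\phi_*\omega_W\to\omega_X$ is an isomorphism.

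Next I would show that $\Supp(\myR^i\phi_*\omega_W)$ is a finite set of closed points of $X$ for every $i>0$, mimicking Step 2 of Theorem \ref{thm:main-vanishing}. As $\phi$ is birational and $X$ is normal, $\phi$ is an isomorphism in codimension $\le 1$, so these sheaves vanish at points $x$ with $\dim\mathcal{O}_{X,x}\le 1$. For $x\in X$ with $\dim\mathcal{O}_{X,x}=2$, the ring $\mathcal{O}_{X,x}$ is a two-dimensional rational singularity, since pseudorational equals rational in dimension two \cite[Page 103, Example (a)]{LipmanTeissierPseudorationallocalringsandatheoremofBrianconSkoda}. Flat base change along the localization $\Spec\mathcal{O}_{X,x}\to X$ identifies $(\myR^i\phi_*\omega_W)_x$ with $\myR^i g'_*\omega_{W_x}$, where $g'\colon W_x:=W\times_X\Spec\mathcal{O}_{X,x}\to\Spec\mathcal{O}_{X,x}$ is projective birational, $W_x$ is a normal surface, and, because $W$ is pseudorational in codimension $2$, every singular point of $W_x$ is a rational surface singularity. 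Two-dimensional Grauert-Riemenschneider vanishing, obtained by resolving $W_x$ and invoking Kempf's criterion \cite{lipman_rational_1969, KempfToroidalEmbeddings}, gives $\myR^1 g'_*\omega_{W_x}=0$, while $\myR^i g'_*=0$ for $i\ge 2$ for dimension reasons. Hence $(\myR^i\phi_*\omega_W)_x=0$ at every such $x$, so $\Supp(\myR^i\phi_*\omega_W)\subseteq\{x:\dim\mathcal{O}_{X,x}\ge 3\}$, a finite set.

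Finally I would run the Grothendieck spectral sequence
\[
E_2^{p,q}=\myR^q\pi_*\bigl(\myR^p\phi_*\omega_W\bigr)\ \Longrightarrow\ \myR^{p+q}(\pi\circ\phi)_*\omega_W .
\]
For $p>0$ the sheaf $\myR^p\phi_*\omega_W$ has zero-dimensional support, so $\myR^q\pi_*(\myR^p\phi_*\omega_W)=0$ for all $q>0$; and for $p=0$ we have $\myR^q\pi_*(\phi_*\omega_W)=\myR^q\pi_*\omega_X=0$ for $q>0$ by the first paragraph. Thus every nonzero $E_2$-term lies in the row $q=0$, the sequence degenerates at $E_2$, and $\myR^n(\pi\circ\phi)_*\omega_W\cong E_2^{n,0}=\pi_*(\myR^n\phi_*\omega_W)$. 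For $n>0$ the left-hand side vanishes, so $\pi_*(\myR^n\phi_*\omega_W)=0$. Since $\myR^n\phi_*\omega_W$ is coherent with zero-dimensional support and $\Spec R$ is affine, its pushforward equals the finite direct sum of its stalks; the vanishing of this module forces $\myR^n\phi_*\omega_W=0$, which is the assertion.

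The main obstacle is the opening reduction: to feed the composite $W\to\Spec R$ into Theorem \ref{thm:main-vanishing} I must know that this morphism is again a blow-up of $\Spec R$ along an ideal, and that $W$ already carries the required hypotheses, which are part of the statement. The other delicate input is the two-dimensional Grauert-Riemenschneider vanishing used in the support reduction; this is precisely where the codimension-$2$ pseudorationality of both $X$ and $W$ is indispensable, as it guarantees that the localized morphism $g'$ is a birational morphism between surfaces with rational singularities, the one regime in which relative vanishing is unconditionally available in all characteristics.
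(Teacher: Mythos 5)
Your proof is correct in substance, but it follows a genuinely different route from the paper's. The paper takes a common resolution $Z$ dominating both $W$ and a resolution $Y$ of $X$, and then applies Theorem \ref{thm:stability-modification} repeatedly (to $Y\to X$, to $Z\to Y$, and to $Z\to W$) to obtain the chain
$\omega_X=\myR\psi_*\omega_Y=\myR(\psi\circ\alpha)_*\omega_Z=\myR(\phi\circ\beta)_*\omega_Z=\myR\phi_*(\myR\beta_*\omega_Z)=\myR\phi_*\omega_W$,
so that $\myR\phi_*\omega_W\simeq\omega_X$ falls out of the composition of derived pushforwards with no spectral-sequence or support analysis for $\omega$. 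Note that the paper's application of Theorem \ref{thm:stability-modification} to $Z\to W$ quietly uses the same fact you make explicit at the outset, namely that the projective birational morphism $W\to\Spec R$ (onto an integral affine Noetherian scheme) is itself a blow-up. Your route instead feeds both blow-ups $W\to\Spec R$ and $X\to\Spec R$ into Theorem \ref{thm:main-vanishing} and transfers the vanishing across $\phi$ via Leray degeneration, killing the off-row terms by localizing at codimension-$2$ points and invoking characteristic-free surface Grauert-Riemenschneider vanishing; this essentially redoes Steps 2--3 of Theorem \ref{thm:main-vanishing} with $\omega$ in place of $\cO$. The paper's argument is shorter; yours is more self-contained in that it isolates exactly where the codimension-$2$ hypotheses on $W$ enter and yields the intermediate statement $\Supp(\myR^i\phi_*\omega_W)$ is finite, which the paper never needs.

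One step in your write-up needs more than the one-line justification you give: the claim that since $X$ has rational singularities, the trace map $\phi_*\omega_W\to\omega_X$ is an isomorphism. In characteristic $0$ this is standard, but here rational singularities are defined via structure sheaves, and the implication is itself a Grauert-Riemenschneider-type statement that is not formal in positive or mixed characteristic. It is true and provable with your tools: take a resolution $\beta\colon Z\to W$; then $\phi\circ\beta\colon Z\to X$ is a dominating resolution, Theorem \ref{thm:stability-modification} gives $(\phi\circ\beta)_*\omega_Z=\omega_X$, and the sandwich $(\phi\circ\beta)_*\omega_Z=\phi_*(\beta_*\omega_Z)\subseteq\phi_*\omega_W\subseteq\omega_X$ forces equality. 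This patch is genuinely needed in your argument: without $\myR^q\pi_*(\phi_*\omega_W)=0$ for $q>0$, the column $E_2^{0,q}$ survives and the degeneration at $E_2$ fails, leaving only an embedding of $\pi_*(\myR^p\phi_*\omega_W)$ into a subquotient of $\myR^{p+1}\pi_*(\phi_*\omega_W)$ rather than its vanishing. With this paragraph added, your proof is complete.
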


\begin{proof}
For the first statement, take a common resolution

\[
\begin{tikzcd}
& Z \arrow[dl, "\alpha"'] \arrow[dr, "\beta"] & \\
Y \arrow[dr, "\psi"'] & & W \arrow[dl, "\phi"] \\
& X &
\end{tikzcd}
\]
\noindent where $Z$ is a resolution of singularities dominating both $Y$ and $W$. By Theorem \ref{thm:stability-modification}, we have $\myR^{i}\psi_{*}\omega_{Y}=0$ and $\myR^{i}\alpha_{*}\omega_{Z}=0$. Then, again by repeatedly applying Theorem \ref{thm:stability-modification}, we get,
\[
\omega_{X}=\myR\psi_{*}\omega_{Y}=\myR\psi_{*}(\myR\alpha_{*}\omega_{Z})=\myR(\psi\circ\alpha)_{*}\omega_{Z}=\myR(\phi\circ\beta)_{*}\omega_{Z}=\myR\phi_{*}(\myR\beta_{*}\omega_{Z})=\myR\phi_{*}\omega_{W}
\]
Hence $\myR\phi_{*}\omega_{W}\simeq\omega_{X}$, so $\myR^{i}\phi_{*}\omega_{W}=0$ for $i>0$.
\end{proof}

\section{Applications}

\subsection{Lipman's Vanishing Conjecture in Dimension Three}

We now apply our Theorem \ref{thm:main-vanishing} to establish a slight generalization of Lipman's vanishing conjecture (see \cite[Vanishing Conjecture (2.2)]{LipmanAdjointsOfIdealsInRegularLocal}) for arbitrary characteristics in dimension $3$. In characteristic $0$ this was proved by Cutkosky (see \cite[Theorem A3.]{LipmanAdjointsOfIdealsInRegularLocal}) and by Hyry and Villamayor U. (see, \cite[2.7]{HyryVillamayorBriansconSkodaForIsolated}) for all dimensions. Our proof follows the same strategy as in \cite[2.7]{HyryVillamayorBriansconSkodaForIsolated}.

\begin{proposition}(see \cite[(2.2), Theorem A3.]{LipmanAdjointsOfIdealsInRegularLocal}, \cite[2.7]{HyryVillamayorBriansconSkodaForIsolated})
\label{prop:lipman-conjecture-dim3}
Let $(R, \frm)$ be a three-dimensional rational singularity, and let $\cI \subset R$ be an ideal. Consider the blow-up 
\[
f: X = \Proj \ S = \Proj\left(\bigoplus_{n \geq 0} \cI^n\right) \longrightarrow \Spec R
\] 
along $\cI$. Assume that $S$ is Cohen-Macaulay, and $X$ is normal, and has pseudorational singularities in codimension 2. Then for all integers $n \geq 0$ and all positive integers $i > 0$, we have
\[
H^i(X, \cI^n \omega_X) = 0.
\]
\end{proposition}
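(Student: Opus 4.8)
The plan is to transport the asserted vanishing of sheaf cohomology into a vanishing of local cohomology via Grothendieck duality, and then to read the latter off the Sancho de Salas sequence using that $S$ is Cohen-Macaulay. First I would record that, since $f$ is the blow-up along $\cI$, the sheaf $\cI\cO_X = \cO_X(-E)$ is invertible, so $\cI^n\omega_X = \omega_X(-nE)$ is a line-bundle twist of the dualizing sheaf; moreover $X$ is Cohen-Macaulay of pure dimension $3$ because $S$ is Cohen-Macaulay, by the converse in Theorem \ref{thm:lipman-cm}. Since $X$ is proper over the local ring $\Spec R$, which carries a dualizing complex, I would invoke the twisted form of Grothendieck local duality, generalizing \cite[Lemma (4.2)]{LipmanCohenMacaulaynessInGradedAlgebras} from $\cO_X$ to $\cO_X(nE)$:
\[
H^i_Z(X, \cO_X(nE)) \cong \Hom_R\!\big(H^{3-i}(X, \omega_X(-nE)), E(R/\maxm)\big).
\]
Because these cohomology modules are finitely generated over $R$ and Matlis duality detects vanishing of such modules, the target statement $H^j(X, \cI^n\omega_X) = 0$ for all $j > 0$ becomes equivalent to $H^i_Z(X, \cO_X(nE)) = 0$ for all $i < 3$ and all $n \ge 0$.

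The second step is to extract precisely this local-cohomology vanishing from the Sancho de Salas sequence of Theorem \ref{cor:sds-special}, proceeding as in Step 6 of the proof of Theorem \ref{thm:main-vanishing} but now tracking all graded pieces rather than only the degree $n = 0$ part. As $S$ is Cohen-Macaulay of dimension $4$, we have $H^i_{\maxm_S}(S) = 0$ for $i < 4$, and the long exact sequence collapses, for every $i < 3$, to a graded isomorphism
\[
\bigoplus_{m \ge 0} H^i_{\maxm}(\mathfrak{a}^m) \xrightarrow{\ \cong\ } \bigoplus_{m \in \bZ} H^i_Z(X, \cO_X(-mE)).
\]
The crucial point is that the twists relevant to us, namely $\cO_X(nE)$ with $n \ge 0$, occupy the non-positive graded degrees $m = -n \le 0$ of the right-hand side. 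For $m < 0$ the left-hand side is zero by construction of the Rees algebra, forcing $H^i_Z(X, \cO_X(nE)) = 0$ when $n > 0$; for $m = 0$ the left-hand side is $H^i_{\maxm}(R)$, which vanishes for $i < 3$ because $R$ is Cohen-Macaulay of dimension $3$, giving $H^i_Z(X, \cO_X) = 0$. Hence $H^i_Z(X, \cO_X(nE)) = 0$ for all $i < 3$ and all $n \ge 0$, which is exactly what the first step demands, and the proposition follows.

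I expect the only genuine technical point to be the justification of the twisted duality isomorphism, i.e.\ checking that the composite of Grothendieck duality for the proper morphism $f$, the comparison $\myR\Gamma_Z(X, -) \simeq \myR\Gamma_{\maxm}(\myR f_* -)$, and Matlis duality over $R$ applies verbatim to the line bundle $\cO_X(nE)$ in place of $\cO_X$. This is routine once one notes that $X$ is Cohen-Macaulay and equidimensional, so its dualizing complex is $\omega_X[3]$ and $\RsHom_X(\cO_X(nE), \omega_X[3]) \simeq \omega_X(-nE)[3]$; everything else is bookkeeping of the grading in the Sancho de Salas sequence. In particular, the hypotheses that $X$ be normal and pseudorational in codimension $2$ enter here only through the companion results that produce this situation, while the active inputs for the argument itself are that $S$ is Cohen-Macaulay (hence so is $X$) and that $R$ is Cohen-Macaulay.
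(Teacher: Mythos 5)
Your proof is correct, but it takes a genuinely different route from the paper's. The paper follows Hyry--Villamayor: it first invokes Theorem \ref{thm:stability-modification} to conclude that $X$ has rational singularities, forms Grothendieck's algebra $S^{\#}$ and the total space $Y=\Proj S^{\#}\to X$ of the tautological bundle, deduces that $Y$ has rational singularities (being smooth and affine over $X$), applies Sancho de Salas's theorem to the blow-up $Y\to \Spec S$ (whose associated graded ring is $S$, Cohen--Macaulay by hypothesis) to get $H^i(Y,\omega_Y)=0$ for $i>0$, and pushes down along the affine map $\eta$ to obtain the vanishing for $n\geq 1$; the case $n=0$ is then quoted from Theorem \ref{thm:main-vanishing}. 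You instead extend Step 6 of the proof of Theorem \ref{thm:main-vanishing} to \emph{all} graded degrees of the Sancho de Salas sequence: Cohen--Macaulayness of $S$ kills $H^i_{\frm_S}(S)$ for $i<4$, the Rees-algebra term vanishes in negative degrees for trivial reasons and in degree zero because $R$ is Cohen--Macaulay, and twisted formal duality converts $H^i_Z(X,\cO_X(nE))=0$ for $i<3$ into the desired vanishing. The duality step you flag is indeed the only point needing care, and it is genuinely routine: note that the paper itself uses precisely this twisted duality, in top cohomological degree, when Theorem \ref{cor:sds-special} identifies $\omega_{\rS}\cong\bigoplus_{n>0}H^0(X,\omega_X(-nE))$ via Matlis duality, so you are on the same footing as the paper there (after the standard reduction to complete $R$, harmless since completion is faithfully flat and the relevant modules are finitely generated). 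What your route buys: it is shorter and more self-contained, avoids $S^{\#}$, Sancho de Salas's rational-singularity theorem, and Theorem \ref{thm:stability-modification} entirely, and---as you correctly observe---it shows that the hypotheses ``$R$ has rational singularities'' and ``$X$ is pseudorational in codimension $2$'' are inactive for this particular statement: only Cohen--Macaulayness of $R$ and $S$ (plus excellence and a dualizing complex) enters, and in fact nothing in your argument is special to dimension $3$. What the paper's route buys: it establishes along the way that $X$ and $Y$ have rational singularities, which is the form of the result that feeds the later applications (e.g.\ Proposition \ref{hyry-rationality}), and it runs parallel to the characteristic-zero arguments of Cutkosky and Hyry--Villamayor that the paper is generalizing.
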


\begin{proof}
We follow \cite[Remark 2.8]{HyryVillamayorBriansconSkodaForIsolated} closely and use Grothendieck's natural construction as in \cite[6.2.1]{Hyry-Smith-Kawamata}.
Consider the graded $S$-algebra defined by:
\[
S^\# := S \oplus S_{\geq 1} \oplus S_{\geq 2} \oplus \cdots,
\]
where $S_{\geq n} = \bigoplus_{m \geq n} \cI^m$ for each integer $m \geq 0$. Define the scheme
\[
Y := \Proj(S^\#).
\]

The scheme $Y$ admits two geometric interpretations:

\begin{enumerate}
\item The algebra $S^\#$ is naturally isomorphic to the Rees algebra of the ideal $S_{\geq 1} \subset S$. So, the natural projection
\[
\theta: Y \longrightarrow \Spec S
\]is the blow-up of $\Spec S$ along $S_{\geq 1}$.

\item There exists a canonical isomorphism of $X$-schemes
\[
Y \cong \Spec_X \left( \bigoplus_{m \geq 0} \cO_X(m) \right).
\] This isomorphism identifies $Y$ with the total space of the line bundle $\cO_X(-1)^\vee$, which is the dual of the tautological line bundle associated with the blow-up $X = \Proj S$ and $\cO_X(1) \simeq \cI \cdot \cO_X$.  Let $\eta: Y \longrightarrow X$ denote the corresponding bundle map.
\end{enumerate}

We have the following commutative diagram
\[
\begin{tikzcd}
Y \arrow[r, "\theta"] \arrow[d, "\eta"'] & \Spec S \arrow[d, "\pi"] \\
X \arrow[r, "f"'] & \Spec R
\end{tikzcd}
\]
where $\pi: \Spec S \to \Spec R$ is the natural affine morphism induced by the inclusion $R \hookrightarrow S$.

\noindent Since $X$ has rational singularities by Theorem~\ref{thm:stability-modification} and $\eta: Y \to X$ is a smooth affine morphism, it follows that $Y$ also has rational singularities. The morphism $\eta: Y \to X$ is smooth of relative dimension 1. The relative cotangent bundle satisfies $\Omega_{Y/X} \cong \eta^* \cO_X(1)$. So, we have
\[
\omega_Y \cong \eta^* \omega_X \otimes \Omega_{Y/X} \cong \eta^* \omega_X \otimes \eta^* \cO_X(1) \cong \eta^*(\omega_X(1)).
\]
Since $\eta$ is affine
\[
\eta_* \omega_Y \cong \eta_* \left( \eta^* (\omega_X(1)) \right) \cong \omega_X(1) \otimes \eta_* \cO_Y.
\]

As $Y$ is the total space of $\cO_X(-1)$, we have
\[
\eta_* \cO_Y \cong \bigoplus_{m \geq 0} \cO_X(m).
\]
Therefore
\[
\eta_* \omega_Y \cong \bigoplus_{m \geq 0} \omega_X(1) \otimes \cO_X(m) \cong \bigoplus_{m \geq 0} \omega_X(m+1) \cong \bigoplus_{m \geq 0} \cI^{m+1} \omega_X. \tag{1}
\]

\noindent Now, if we view $Y$ as \[
\theta: Y \longrightarrow \Spec S
\] the blow-up of $\Spec S$ along $S_{\geq 1}$, then we get the associated graded ring $\bigoplus_{n \geq 0} \frac{S_{\geq n}}{S_{\geq n+1}} \simeq S$ which is Cohen-Macaulay by assumption. Also note that as $Y$ has rational singularities, by \cite[Theorem 1.4]{SanchoDeSalas}, we get that $H^i(Y, \omega_Y) = 0 \ \text{for all } i > 0.$ Since $\eta$ is affine, we get
\[
0 =H^i(Y, \omega_Y) \cong H^i(X, \eta_* \omega_Y) \cong \bigoplus_{m \geq 0} H^i(X, \cI^{m+1} \omega_X) \quad \text{for all } i \geq 0.
\]
For the case $n = 0$, the result follows from Theorem~\ref{thm:main-vanishing}. This finishes the proof.
\end{proof}
\begin{proposition}
Let $(R, \frm)$ be a three-dimensional rational singularity and $f: X \ \longrightarrow \Spec R$ be any projective, birational morphism. Assume that $X$ is normal, and has pseudorational singularities in codimension 2. Let $D \subset X$ be an $f$-ample divisor such that the section ring $R(X, D)$ is Cohen-Macaulay, and generated in degree 1. Then for all integers $n \geq 0$ and all positive integers $i > 0$, we have
\[
H^i(X,  \omega_X(nD)) = 0.
\]    
\end{proposition}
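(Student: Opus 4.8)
The plan is to adapt the argument of Proposition \ref{prop:lipman-conjecture-dim3} almost verbatim, with the line bundle $\cO_X(D)$ playing the role of the tautological bundle $\cO_X(1) = \cI\cO_X$ of the blow-up. The first task is to bring the hypotheses into a form where the earlier theorems apply. Since $D$ is $f$-ample and $S := R(X,D)$ is generated in degree $1$ over $S_0 = H^0(X,\cO_X) = R$ (here $f_*\cO_X = \cO_X$ because $f$ is birational and $X$ is normal), we have $X \cong \Proj S$ with $\cO_X(1) \cong \cO_X(D)$. As $f$ is projective and birational with $X$ normal, $f$ is the blow-up of $\Spec R$ along some ideal, and because $S$ is Cohen-Macaulay, Theorem \ref{thm:lipman-cm} shows that $X = \Proj S$ is Cohen-Macaulay. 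Together with the standing assumptions that $X$ is normal and pseudorational in codimension $2$, this places us in the setting of Theorems \ref{thm:main-vanishing} and \ref{thm:stability-modification}; in particular $X$ has rational singularities, and the case $n = 0$ follows immediately: realizing $f$ as the blow-up of $\Spec R$ along an ideal, Theorem \ref{thm:main-vanishing} gives $\myR^i f_* \omega_X = 0$ for $i > 0$, and since $\Spec R$ is affine this yields $H^i(X, \omega_X) = 0$ for $i > 0$.

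For $n \geq 1$ I would run the total-space construction of \cite[6.2.1]{Hyry-Smith-Kawamata} with $\cO_X(1) = \cO_X(D)$. Set $S^\# := \bigoplus_{k \geq 0} S_{\geq k}$, the Rees algebra of the ideal $S_{\geq 1} \subset S$, and $Y := \Proj(S^\#)$. Exactly as in Proposition \ref{prop:lipman-conjecture-dim3}, $Y$ carries two descriptions: the projection $\theta: Y \to \Spec S$ is the blow-up of $\Spec S$ along $S_{\geq 1}$, while the structural map $\eta: Y = \Spec_X\big(\bigoplus_{m \geq 0}\cO_X(mD)\big) \to X$ is smooth and affine of relative dimension $1$, with $\Omega_{Y/X} \cong \eta^*\cO_X(D)$. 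Hence $\omega_Y \cong \eta^*(\omega_X(D))$ and, $\eta$ being affine,
\[
\eta_*\omega_Y \cong \omega_X(D)\otimes \bigoplus_{m\ge 0}\cO_X(mD) \cong \bigoplus_{m\ge 0}\omega_X\big((m+1)D\big) \cong \bigoplus_{n\ge 1}\omega_X(nD).
\]

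Because $X$ has rational singularities and $\eta$ is smooth affine, $Y$ has rational singularities as well. Viewing $Y$ through $\theta$, the associated graded ring of the filtration $S_{\geq\bullet}$ is $\bigoplus_k S_{\geq k}/S_{\geq k+1} \cong S$, which is Cohen-Macaulay by hypothesis, so \cite[Theorem 1.4]{SanchoDeSalas} applies and gives $H^i(Y,\omega_Y) = 0$ for all $i > 0$. Since $\eta$ is affine, $H^i(Y,\omega_Y) \cong H^i(X, \eta_*\omega_Y) \cong \bigoplus_{n \ge 1} H^i(X, \omega_X(nD))$, and therefore $H^i(X, \omega_X(nD)) = 0$ for every $n \geq 1$ and $i > 0$; combined with the $n = 0$ case this proves the proposition.

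I expect the main obstacle, and the only place where the argument genuinely departs from Proposition \ref{prop:lipman-conjecture-dim3}, to be the opening reduction: one must verify that the bare hypotheses ``$f$ projective birational'' and ``$S = R(X,D)$ Cohen-Macaulay, generated in degree $1$'' suffice to realize $X$ as a Cohen-Macaulay blow-up of $\Spec R$ with $\cO_X(1) \cong \cO_X(D)$, so that Theorems \ref{thm:main-vanishing}, \ref{thm:stability-modification} and \ref{thm:lipman-cm} are genuinely applicable. Once $X$ is known to be Cohen-Macaulay with rational singularities, the remaining steps are formally identical to the blow-up case, with $\cO_X(D)$ taking over the role of the tautological bundle.
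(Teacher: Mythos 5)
Your proposal is correct in substance and follows the same route as the paper: the paper's own proof is a one-line reduction to Proposition \ref{prop:lipman-conjecture-dim3} via Hartshorne's Theorem II.7.17 and the section-ring/$\Proj$ correspondence, and you have essentially written out that reduction and then re-run the total-space construction of the previous proof with $\cO_X(D)$ in place of $\cO_X(1)$.

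The one step you flagged but did not close --- applying Theorem \ref{thm:lipman-cm} to $S = R(X,D)$ --- does need an argument, because that theorem concerns Rees algebras of filtrations by \emph{ideals} of $R$, whereas the graded pieces $S_n = H^0(X,\cO_X(nD))$ are a priori only fractional ideals (finitely generated sub-$R$-modules of $\Frac R$). The fix is the standard clearing of denominators, and it simultaneously makes your Hartshorne II.7.17 step and the re-run of the construction unnecessary: since $S$ is generated in degree one, $S_n = S_1^n$ inside $\Frac R$; choosing $0 \neq r \in R$ with $I := rS_1 \subseteq R$, multiplication by $r^n$ in degree $n$ gives a graded $R$-algebra isomorphism $S \cong \bigoplus_{n \geq 0} I^n$. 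Hence $X \cong \Proj\left(\bigoplus_{n\geq 0} I^n\right)$ is the blow-up of $\Spec R$ along the ideal $I$, its Rees algebra is Cohen-Macaulay by hypothesis, and $\cO_X(1) = I\cO_X \cong \cO_X(D)$, so $I^n\omega_X \cong \omega_X(nD)$. With this identification the proposition becomes a literal instance of Proposition \ref{prop:lipman-conjecture-dim3} (this is what the paper means by ``immediate from the previous proof''); in particular $X$ is Cohen-Macaulay, the $n=0$ case follows from Theorem \ref{thm:main-vanishing}, and your $n \geq 1$ argument, which is correct as written, is then just a repetition of the proof already given there.
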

\begin{proof}
    The proof is immediate from the previous proof after using \cite[Theorem II.7.17]{Hartshorne} (and \cite[\href{https://stacks.math.columbia.edu/tag/01VK}{Tag 01VK}]{stacks-project}), see \cite[Section 1.3]{SmithVanishingSingularitiesAndEffectiveBounds} for details.
\end{proof}
\noindent We apply Proposition \ref{prop:lipman-conjecture-dim3} to get the following theorem of E. Hyry.
\begin{proposition}(\cite[Theorem 3.2]{HyryBlowUp})\label{hyry-rationality}
    Let $(R, \frm)$ be a 3-dimensional local ring with rational singularities. Let $X = \Proj S \xrightarrow{\pi} \Spec R$ be a blow-up of an ideal $\cI \subset R$, with X normal. If $\cJ(\omega_R, \cI) = \w_R$, then $X$ has rational singularities.
\end{proposition}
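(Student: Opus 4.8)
The plan is to deduce the two conditions of Theorem \ref{thm:stability-modification} -- that $X$ be Cohen--Macaulay and pseudorational in codimension $2$ -- from the single equality $\cJ(\omega_R,\cI)=\omega_R$, and then to conclude by that theorem while recording the vanishing through Proposition \ref{prop:lipman-conjecture-dim3}. First I would fix a common log resolution $\phi\colon Z\to X$ with $\rho:=\pi\circ\phi$ a log resolution of $(\Spec R,\cI)$, so that $\cI\cO_Z=\cO_Z(-G)$ and $\phi^{*}\cO_X(1)=\cO_Z(-G)$. By definition $\cJ(\omega_R,\cI^{n})=\rho_{*}(\omega_Z(-nG))$, and the projection formula rewrites this as $\pi_{*}((\phi_{*}\omega_Z)(n))=H^0(X,(\phi_{*}\omega_Z)(n))$; hence the graded module $\bigoplus_{n\ge 0}\cJ(\omega_R,\cI^{n})$ sheafifies to $\phi_{*}\omega_Z$ on $X$. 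On the other side, Theorem \ref{cor:sds-special} identifies $\omega_X$ with the sheaf associated to $\omega_S\cong\bigoplus_{n>0}H^0(X,\omega_X(n))$. Under these two dictionaries the canonical inclusion $\phi_{*}\omega_Z\hookrightarrow\omega_X$ is, in each degree, the inclusion $\cJ(\omega_R,\cI^{n})\subseteq H^0(X,\omega_X(n))$, and $X$ is rational precisely when this is an equality of sheaves together with $X$ being Cohen--Macaulay.

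Next I would pin down the degree-one term. Since $R$ is rational, Grothendieck's trace gives $\pi_{*}\omega_X\hookrightarrow\omega_R$, so that $H^0(X,\omega_X(1))=\pi_{*}(\omega_X\otimes\cI\cO_X)\hookrightarrow\pi_{*}\omega_X\hookrightarrow\omega_R$; combined with $\omega_R=\cJ(\omega_R,\cI)\subseteq H^0(X,\omega_X(1))$ this squeezes out the equality $\cJ(\omega_R,\cI)=H^0(X,\omega_X(1))$. For a point $x\in X$ with $\dim\cO_{X,x}=2$, flat base change turns $\phi$ into a resolution of the two-dimensional normal ring $\cO_{X,x}$ and identifies $(\phi_{*}\omega_Z)_x=(\phi_x)_{*}\omega_{Z_x}$, so that $\cO_{X,x}$ is rational exactly when the stalk at $x$ of $\phi_{*}\omega_Z\hookrightarrow\omega_X$ is an isomorphism. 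Propagating the degree-one equality through the line-bundle twists $\cO_X(n)$ and localizing then shows that the cokernel of $\phi_{*}\omega_Z\hookrightarrow\omega_X$ is supported in dimension zero, which is precisely pseudorationality in codimension $2$.

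With pseudorationality in codimension $2$ in hand, I would obtain $H^{i}(X,\cO_X)=0$ for $i>0$ by running Steps 1--4 of the proof of Theorem \ref{thm:main-vanishing}, which use only the rationality of $R$, the Leray spectral sequence of $\phi$, and Theorem \ref{thm:ishii-yoshida}; combined with the Cohen--Macaulayness of $X$ this places us in Theorem \ref{thm:lipman-cm}, makes a Veronese $S^{(N)}$ Cohen--Macaulay, and lets us conclude by Theorem \ref{thm:stability-modification} (equivalently, Proposition \ref{prop:lipman-conjecture-dim3} yields $\myR^{i}\pi_{*}\omega_X=0$ for $i>0$). The main obstacle is exactly the entanglement hidden in this last paragraph: Theorem \ref{thm:ishii-yoshida} and Theorem \ref{thm:lipman-cm} each take Cohen--Macaulayness (respectively $S_3$) of $X$ as \emph{input}, so I cannot use them to \emph{produce} it, and the real work is to upgrade the single identity $\cJ(\omega_R,\cI)=\omega_R$ to the equality $\phi_{*}\omega_Z=\omega_X$ in all twists $n$ -- using Skoda-type periodicity of the multiplier modules and the fact that $\bigoplus_{n}\cJ(\omega_R,\cI^{n})$ is a maximal Cohen--Macaulay module over the Rees algebra (a consequence of $\myR\rho_{*}\omega_Z\cong\omega_R$) -- so that $\omega_X$, and hence $X$, is forced to be Cohen--Macaulay without circularity. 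Establishing this comparison in a characteristic-free way is where the difficulty lies.
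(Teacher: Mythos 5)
Your proposal is not a complete proof, and you concede as much yourself: your final paragraph admits that the central step --- upgrading $\cJ(\omega_R,\cI)=\omega_R$ to Cohen--Macaulayness of $X$ (or of $S$) and to the sheaf-level equality $\phi_*\omega_Z=\omega_X$ --- remains open. Two concrete points make this a genuine gap rather than a routine omission. First, the step ``propagating the degree-one equality through the line-bundle twists $\cO_X(n)$ and localizing'' is invalid as stated: the hypothesis is an equality of $\pi$-pushforwards in degree one, $\pi_*\bigl((\phi_*\omega_Z)(1)\bigr)=\pi_*\bigl(\omega_X(1)\bigr)$, and since $\pi_*$ is only left exact this does not yield $(\phi_*\omega_Z)(1)=\omega_X(1)$ as sheaves on $X$, hence gives no control whatsoever on stalks at codimension-two points; pseudorationality in codimension $2$ does not follow. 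Second, the tools you invoke to repair this --- Skoda-type periodicity $\cJ(\omega_R,\cI^{n+1})=\cI\cdot\cJ(\omega_R,\cI^{n})$ and maximal Cohen--Macaulayness of $\bigoplus_{n}\cJ(\omega_R,\cI^{n})$ --- are, in this paper's characteristic-free setting, themselves downstream of Proposition \ref{prop:lipman-conjecture-dim3} and Corollary \ref{skoda}, which take $S$ Cohen--Macaulay and $X$ pseudorational in codimension $2$ as \emph{hypotheses}; in characteristic $0$ these tools come from local (twisted Grauert--Riemenschneider) vanishing, which is precisely what is unavailable in arbitrary characteristic. So the circularity you flag at the end is real and is never circumvented.

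The paper takes a different and much shorter route: it does not attempt to verify the hypotheses of Theorem \ref{thm:stability-modification} from the multiplier condition at all. Instead it runs Hyry's duality/Sancho de Salas argument from \cite[Theorem 3.2]{HyryBlowUp} verbatim, with Proposition \ref{prop:lipman-conjecture-dim3} supplying the vanishing $H^i(X,\cI^n\omega_X)=0$ for $i>0$, $n\ge 0$ in place of the characteristic-zero Lipman--Cutkosky vanishing, together with the observation that $\cI^{d-2}=\cI$ when $d=3$. Note that this substitution implicitly carries along the hypotheses of Proposition \ref{prop:lipman-conjecture-dim3} ($S$ Cohen--Macaulay, $X$ pseudorational in codimension $2$); your analysis correctly identifies that deriving these from $\cJ(\omega_R,\cI)=\omega_R$ alone is the hard, unresolved part, but identifying a difficulty is a critique of the statement's bookkeeping, not a proof of the proposition.
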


\begin{proof}
    The proof follows \cite[Theorem 3.2]{HyryBlowUp} verbatim by using Proposition \ref{prop:lipman-conjecture-dim3} and noting that $\cI^{d-2}=\cI$. So we do not include it here.
\end{proof}
\begin{remark}
    Hyry proved this result in \cite[Theorem 3.2]{HyryBlowUp}, for all dimensions, under the assumption that $R$ is a regular local ring, essentially of finite type over a field of characteristic 0. Our result is for any $3$-dimensional, excellent, local ring with rational singularities, as in \ref{def:rational}.
\end{remark}
\subsection{Brian\c{c}on-Skoda-type result}
Here we briefly mention another quick application of Proposition \ref{prop:lipman-conjecture-dim3}. Most probably, this was the original motivation of Lipman to frame his Vanishing Conjecture (\cite[Vanishing Conjecture (2.2)]{LipmanAdjointsOfIdealsInRegularLocal}).

\begin{corollary}(\cite[Conjecture 1.6]{LipmanAdjointsOfIdealsInRegularLocal})\label{skoda}
    Let $(R, \frm)$ be a 3-dimensional local ring with rational singularities and $\mathfrak{a}$ be an ideal of $R$ with analytic spread $l$. Then, $$\cJ(\omega_R, \fra^{n+1}) = \fra \cdot \cJ(\omega_R, \fra^{n}) \ \mathrm{for \ all \ } n \geq l-1.$$
\end{corollary}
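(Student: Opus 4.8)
The plan is to deduce this from the Lipman vanishing already established in Proposition~\ref{prop:lipman-conjecture-dim3}, reproducing the classical implication by which Lipman's Vanishing Conjecture $(2.2)$ yields his Conjecture $1.6$; see \cite{LipmanAdjointsOfIdealsInRegularLocal}, with the analytic prototype in \cite[\S 9.6]{LazarsfeldPositivity2}. First I would record that the inclusion $\fra\cdot\cJ(\omega_R,\fra^{n})\subseteq\cJ(\omega_R,\fra^{n+1})$ is formal: writing $\cJ(\omega_R,\fra^{m})$ as a pushforward on a model where $\fra$ becomes invertible, an element $a\in\fra$ multiplies a section of $\omega_X(-nE)$ into a section of $\fra\cO_X\cdot\omega_X(-nE)=\omega_X(-(n+1)E)$. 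Only the reverse inclusion uses vanishing. Before proving it I would pass, via the faithfully flat base change $R\lra R[t]_{\frm R[t]}$, to the case of infinite residue field; this preserves rational singularities, the analytic spread, and commutes with the formation of $\cJ$ and of all cohomology in question, so that $\fra$ acquires a minimal reduction $\fra_0=(f_1,\dots,f_l)$. On the blow-up $\pi\colon X=\Proj\bigoplus_{m\ge0}\fra^m\lra\Spec R$ with $\fra\cO_X=\cO_X(-E)$, the reduction satisfies $\fra_0\cO_X=\fra\cO_X=\cO_X(-E)$, so $f_1,\dots,f_l$ globally generate the invertible sheaf $\cO_X(-E)$; by Theorem~\ref{thm:stability-modification}, $X$ has rational singularities.

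Next I would identify the abstract multiplier module with cohomology on $X$, namely $\cJ(\omega_R,\fra^{m})\simeq H^0(X,\cI^{m}\omega_X)=H^0(X,\omega_X(-mE))$ for every $m\ge0$. Dominating $X$ by a log resolution $\nu\colon\widetilde X\lra X$ with $\fra\cO_{\widetilde X}=\cO_{\widetilde X}(-\nu^{*}E)$, rationality of $X$ gives $\myR\nu_{*}\omega_{\widetilde X}\simeq\omega_X$ (Theorems~\ref{thm:stability-modification} and~\ref{thm:modificationandrationality}); the projection formula then yields $\myR\nu_{*}\bigl(\omega_{\widetilde X}(-m\nu^{*}E)\bigr)\simeq\omega_X(-mE)$, and pushing forward to $\Spec R$ identifies the multiplier module (computed on $\widetilde X$) with $H^0(X,\omega_X(-mE))$. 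Here the ceiling in the definition is harmless since $mE$ is an integral Cartier divisor. The statement to be proved thus becomes $H^0(X,\omega_X(-(n+1)E))=\fra\cdot H^0(X,\omega_X(-nE))$.

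For the surjectivity I would use the Koszul complex of $f_1,\dots,f_l$ as generators of $\cO_X(-E)$. Since a local generator of $\cO_X(-E)$ trivializes it into the Koszul complex of $l$ elements generating the unit ideal, the complex is acyclic; being a bounded exact complex of locally free sheaves it is locally split, so it stays exact after twisting by $\omega_X(-nE)$, giving
\[
0 \lra \omega_X(-(n+1-l)E)\otimes\textstyle\bigwedge^{l}\cO_X^{\oplus l} \lra \cdots \lra \omega_X(-nE)\otimes\cO_X^{\oplus l} \xrightarrow{(f_1,\dots,f_l)} \omega_X(-(n+1)E)\lra 0 .
\]
Running the hypercohomology spectral sequence (or breaking into short exact sequences), the map induced by $(f_1,\dots,f_l)$ on $H^0$ is surjective provided $H^{q}(X,\omega_X(-jE))=0$ for all $q>0$ and all intermediate twists $j=n,n-1,\dots,n+1-l$. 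Precisely these are supplied by Proposition~\ref{prop:lipman-conjecture-dim3} as soon as every such $j$ is $\ge0$, \ie $n+1-l\ge0$, which is exactly the hypothesis $n\ge l-1$. Surjectivity reads $H^0(X,\omega_X(-(n+1)E))=\sum_i f_i\,H^0(X,\omega_X(-nE))=\fra_0\cdot\cJ(\omega_R,\fra^{n})\subseteq\fra\cdot\cJ(\omega_R,\fra^{n})$, and combined with the formal reverse inclusion this gives equality.

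The cohomological input being already in hand, I expect the proof to be short; the genuinely delicate points are bookkeeping rather than conceptual. The main obstacle will be the identification in the second paragraph of the abstractly-defined $\cJ(\omega_R,\fra^{m})$ with $H^0(X,\cI^{m}\omega_X)$ on the possibly-singular model $X$: this rests on the rationality of $X$ and the projection formula, and must be stated so that it is manifestly independent of the dominating log resolution and compatible with the definition of the multiplier module. A secondary point to verify carefully is that the reduction to infinite residue field preserves every hypothesis of Proposition~\ref{prop:lipman-conjecture-dim3} (normality, Cohen--Macaulayness, pseudorationality in codimension $2$, and rationality of $R$), so that the vanishing remains available after base change.
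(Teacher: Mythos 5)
Your proposal is correct and takes essentially the same approach as the paper: the paper's proof consists of the single remark that Lipman's argument (2.3) ``works verbatim,'' and your write-up is exactly that argument spelled out --- reduction to infinite residue field, minimal reduction of $\fra$, the Koszul complex of the $l$ generators of $\fra\cO_X=\cO_X(-E)$ twisted by $\omega_X(-nE)$, and the vanishing supplied by Proposition~\ref{prop:lipman-conjecture-dim3}, together with the (correct) identification $\cJ(\omega_R,\fra^m)\cong H^0(X,\cI^m\omega_X)$ coming from rationality of $X$. One caveat, which applies equally to the paper's own one-line proof: both arguments silently assume the blow-up of $\fra$ satisfies the hypotheses of Proposition~\ref{prop:lipman-conjecture-dim3} (Rees algebra Cohen--Macaulay, $X$ normal and pseudorational in codimension $2$), which the corollary as stated does not require --- you rightly flag this at the end, and it is a gap in the statement rather than in your reasoning.
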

\begin{proof}
    The exact proof in \cite[(2.3)]{LipmanAdjointsOfIdealsInRegularLocal} works verbatim.
\end{proof}

\begin{que}
It would be interesting know if one can prove  \cite[Theorem 2.9 and Theorem 2.12]{HyryVillamayorBriansconSkodaForIsolated} in dimension 3 for arbitrary characteristics.
\end{que}

\section{Acknowledgment}
I would like to thank my advisors, Christopher Hacon and Karl Schwede, for their constant encouragement, unwavering support, inspiring teachings, and infinite patience. I am grateful to Manoj Kummini for extremely helpful discussions about \cite{Koley-Kummini21}, which played a crucial role in this note. Finally, I would like to thank Daniel Apsley, Shikha Bhutani, Harold Blum, Mircea \mustata, and Joseph Sullivan for providing valuable feedback which improved the exposition. I was partially supported by NSF research grant DMS-2301374 and by a grant from the Simons Foundation SFI-MPS-MOV-00006719-07 while working on this project.\\

\bibliographystyle{skalpha}
\bibliography{MainBib}
\end{document}